\newcommand{\bea}{\begin{eqnarray*}}
\newcommand{\eea}{\end{eqnarray*}}
\newcommand{\be}{\begin{eqnarray}}
\newcommand{\ee}{\end{eqnarray}}
\def\dd{{\rm {d}}}
\def\var{\mathsf{var}}
\def\mp{\partial}
\def\tr{\mbox{\rm trace}}
\def\Ex{\mathsf{E}}
\def\Fx{\mathsf{F}}
\def\rank{\mbox{\rm rank}}
\def\Mb{{\mathbf M}}
\def\1b{{\mathbf 1}}
\def\ra{\rightarrow}
\def\TT{^\top}
\def\ml{\lambda}
\def\ma{\alpha}
\def\0b{{\mathbf 0}}
\def\Xb{{\mathbf X}}
\def\Yb{{\mathbf Y}}
\def\Zb{{\mathbf Z}}
\def\SM{{\mathscr M}}
\def\SMM{\mathbb{M}}
\def\SN{{\mathscr N}}
\def\SV{{\mathscr V}}
\newcommand{\fin} {\mbox{}~\hfill\mbox{$\Box$}}
\newcommand{\carre} {\mbox{}~\hfill\rule{2mm}{2mm}}
\newtheorem{thm}{Theorem}
\newcommand{\vsp}{\vspace{0.3cm}}
\begin{document}

\title{Bregman divergences based on optimal design criteria and simplicial measures of dispersion}
\titlerunning{Bregman divergences and simplicial measures of dispersion}

\author{Luc Pronzato
\and Henry P. Wynn
\and Anatoly~Zhigljavsky}

\institute{Luc Pronzato \at
              CNRS, UCA, Laboratoire I3S, UMR 7172; 2000, route des Lucioles, Les Algorithmes, b\^at. Euclide B, 06900 Sophia Antipolis, France \\
              Tel.: +33-4-89154345\\
              \email{Luc.Pronzato@cnrs.fr}           
           \and
           H.P. Wynn \at
              London School of Economics, Houghton Street, London, WC2A 2AE, UK \\
              \email{H.Wynn@lse.ac.uk}
            \and
            A. Zhigljavsky \at
            School of Mathematics, Cardiff University, Senghennydd Road, Cardiff, CF24 4YH, UK\\
            \email{ZhigljavskyAA@cf.ac.uk}
}

\date{Received: date / Accepted: date}

\maketitle 

\begin{abstract}
In previous work the authors defined the $k$-th order simplicial distance between probability distributions which arises naturally from a measure of dispersion based
on the squared volume of random simplices of dimension $k$. This theory is embedded in the wider theory of divergences and distances between distributions which includes Kullback-Leibler, Jensen-Shannon, Jeffreys-Bregman divergence and Bhattacharyya distance. A general construction is given based on defining
a directional derivative of a function $\phi$ from one distribution to the other whose concavity or strict concavity influences the properties of the resulting divergence. For the normal distribution these divergences can be expressed as matrix formula for the (multivariate) means and covariances. Optimal experimental design criteria contribute a range of functionals applied to non-negative, or positive definite, information matrices. Not all can distinguish normal distributions but sufficient conditions are given. The $k$-th order simplicial distance is revisited from this aspect and the results are used to test empirically the identity of means and covariances.

\keywords{Simplicial distances \and Bregman divergence \and optimal design criteria \and Burbea-Rao divergence \and energy statistic}

\subclass{62H30 \and 62K05}
\end{abstract}

\section{Introduction}\label{S:intro}

There are close connections between divergences and distances between probability distributions, and certain Fr\'echet-type derivatives. Moreover, for the normal distributions and for the information matrices which dominate the theory of optimal experimental design, the distances can be expressed in matrix form. A natural question that the paper explores is which distance, or which type of experimental design criterion, is best able to distinguish between two normal distributions, in particular when the covariance matrices are close to singularity.

\vsp
Many divergences and distances between probability distributions are constructed from concave functionals $\phi$ defined on the set of probability measures, with the symmetrized Kullback-Leibler divergence, the Jensen-Shannon divergence and Bhattacharyya distance as typical examples; see, e.g., \cite{Basseville2013}, \cite{NielsenB2011}. Note that we shall call them distances also in the case when they only define semi-metrics; that is, when they do not satisfy the triangular inequality. Distances between two normal distributions only depend on their first two moments. One can thus derive simple statistical criteria for testing the identity of means and covariances matrices of two distributions based on two samples, using empirical estimates.

In the same way, design of optimal experiments relies on the maximization of concave functions $\Phi$ of information matrices, see the abundant literature on the subject \cite{AtkinsonDT2007}, \cite{Fedorov72}, \cite{FedorovH97}, \cite{FedorovL2014}, \cite{Pazman86}, \cite{PP2013}, \cite{Pukelsheim93}, \cite{Silvey80}. Such concave, sometimes strictly concave, design criteria form natural candidates for the definition of distances between two normal distributions. Also, in a recent paper \cite{PWZ-2018-jmva} we considered simplicial distances induced by the dispersion functionals
\bea 
\phi_k(\mu)= \Ex_\mu\{ \SV_k^2(X_0,\ldots,X_{k}) \}\,,
\eea
where $\SV_k(X_0,X_1,\ldots,X_{k})$ is the volume of the $k$-dimensional simplex (its length when $k=1$ and area when $k=2$) formed by the $k+1$ vertices $X_0,X_1,\ldots,X_{k}$ assumed to be i.i.d.\ with $\mu$ in $\mathds{R}^d$. The functional $\phi_k^{1/k}$ is concave \cite{PWZ2017}, and may thus also be considered for the construction of distances between distributions.

The paper explores the connections between the various notions of distances induced by these approaches. In particular, we show that the construction of $\phi_k$, based on  volumes of $k$-dimensional simplices, makes the associated distances more sensitive to the dimensionality of the data than other, more usual, distances between normal distributions, Bhattacharyya distance for instance. We also show that Kiefer's family of design criteria $\varphi_p$ with $p>0$, which are rather insensitive to the presence of small eigenvalues, may conveniently be used to define distances between normal distributions, in particular for measures concentrated in small dimension subspaces.


\section{Distances defined from concave functionals}\label{S:concave functionals}

Let $\phi$ denote a twice-continuously Fr\'echet-differentiable real-valued concave functional defined on the set $\SM$ of probability measures on Borel sets of $\mathds{R}^d$. For any $\mu,\zeta\in\SM$, denote by $F_\phi(\mu,\zeta)$ the directional derivative of $\phi$ at $\mu$ in the direction $\zeta$,
\be\label{dir-der}
F_\phi(\mu,\zeta) = \lim_{\ma\ra 0^+} \frac{\phi[(1-\ma)\mu+\ma\xi]-\phi(\mu)}{\ma} \,;
\ee
that is, the Fr\'echet derivative of $\phi$ at $\mu$ in the direction $\xi-\mu$, see for instance \cite{FrigyikSG2008b}.
The Bregman divergence between $\mu$ and $\zeta$ associated with $\phi$ is then
\bea 
D_{\phi,B}(\mu,\zeta)=\phi(\mu)+F_\phi(\mu,\zeta)-\phi(\zeta) \,,
\eea
and the strict concavity of $\phi$ implies that $D_{\phi,B}(\mu,\xi) \geq 0$ with $D_{\phi,B}(\mu,\xi)=0$ if and only if $\zeta=\mu$. When $\phi$ is strictly concave on $\SM$, the Jeffreys-Bregman divergence
\be\label{JB}
D_{\phi,JB}(\mu,\zeta)=\frac12 \left[D_{\phi,B}(\mu,\zeta)+D_{\phi,B}(\zeta,\mu)\right]= \frac12 \left[F_\phi(\mu,\zeta)+F_\phi(\zeta,\mu)\right] \,,
\ee
obtained by symmetrization, and the Burbea-Rao divergence
\be\label{BR}
D_{\phi,BR}(\mu,\zeta)= \phi\left(\frac{\mu+\zeta}{2}\right) - \frac{\phi(\mu)+\phi(\zeta)}{2} \,,
\ee
which does not require $\phi$ to be Fr\'echet-differentiable, define semi-metrics on $\SM$; see for instance \cite{Basseville2013}, \cite{NielsenB2011}. A classical example in the case when $\mu$ and $\zeta$ have densities $\varpi_\mu$ and $\varpi_\zeta$ on $\mathds{R}^d$ with respect to the Lebesgue measure, is given by $\phi$ equal to the Shannon entropy $H_0$, with
$$
H_0(\mu)=-\int \log[\varpi_\mu(x)] \,\varpi_\mu(x)\, \dd x\,.
$$
The Jeffreys-Bregman divergence $D_{H_0,JB}$ is then simply the symmetrized Kullback-Leibler divergence
$$
D_{KL}(\mu,\zeta) = \frac12\, [KL(\mu\|\zeta)+KL(\zeta\|\mu)] \,,
$$
and $D_{H_0,BR}(\mu,\zeta)$ coincides with the Jensen-Shannon divergence,
$$
D_{JS}(\mu,\zeta)=\frac12 \, \left\{KL\left[\mu\|\left(\frac{\mu+\zeta}{2}\right)\right]+KL\left[\zeta\|\left(\frac{\mu+\zeta}{2}\right)\right]\right\} \,,
$$
where $KL(\mu\|\zeta)=\int \log[\varpi_\mu(x)/\varpi_\zeta(x)] \,\varpi_\mu(x)\, \dd x$. More generally, one can define $KL(\mu\|\zeta)=\int \log[\dd\mu/\dd\zeta](x) \,\dd\mu(x)$ if $\mu\ll \zeta$ (i.e., if $\zeta$ dominates $\mu$) and $KL(\mu\|\zeta)=+\infty$ otherwise; see \cite[Sect.~III.9]{Shiryaev96}.
Also, the Hellinger integral $H(\mu,\zeta)$ is defined by
$$
H(\mu,\zeta) = \int \sqrt{\frac{\dd\mu}{\dd\nu}(x)\frac{\dd\zeta}{\dd\nu}(x)}\, \dd\nu(x) \,,
$$
with $\nu$ denoting any dominating measure for $\mu$ and $\zeta$, and $\rho(\mu,\zeta)=\sqrt{1-H(\mu,\zeta)}$ defining a metric on $\SM$. The Bhattacharyya distance $D_B(\mu,\zeta)=-\log H(\mu,\zeta)$ defines a semi-metric on $\SM$; see \cite{Bhattacharyya46}, \cite[Sect.~III.9]{Shiryaev96}.

When $\mu$ and $\zeta$ correspond to normal distributions, the distances defined above only depend on their respective means $a_\mu$ and $a_\zeta$ and covariances $\Sigma_\mu$ and $\Sigma_\zeta$ (we assume that $\Sigma_\mu$ and $\Sigma_\zeta$ have full rank $d$). In particular, $D_{KL}$, $D_{JS}$ and $D_B$ take simple expressions:
\be
D_{KL}(\mu,\zeta) &=& \frac14\, \left[ \tr(\Sigma_\mu^{-1}\Sigma_\zeta) + \tr(\Sigma_\zeta^{-1}\Sigma_\mu) \right] \nonumber \\
&& + \frac14\, (a_\zeta-a_\mu)\TT(\Sigma_\mu^{-1}+\Sigma_\zeta^{-1})(a_\zeta-a_\mu) - \frac{d}{2}\,,  \label{DKL} \\
D_{JS}(\mu,\zeta) &=& \frac12 \, \log\left[
\frac{\det\left(\frac{\Sigma_\mu+\Sigma_\zeta}{2}\right)}{\sqrt{\det(\Sigma_\mu)\det(\Sigma_\zeta)}}
 \right]  \nonumber \\
 &&  + \frac12 \, \log\left[ 1+\frac12 (a_\mu-a_\zeta)\TT(\Sigma_\mu+\Sigma_\zeta)^{-1}(a_\mu-a_\zeta)\right]\,, \label{DJS} \\
D_B(\mu,\zeta) &=& \frac12\, \log\left[
\frac{\det\left(\frac{\Sigma_\mu+\Sigma_\zeta}{2}\right)}{\sqrt{\det(\Sigma_\mu)\det(\Sigma_\zeta)}}
 \right] \nonumber \\
 && +\, \frac14 (a_\mu-a_\zeta)\TT(\Sigma_\mu+\Sigma_\zeta)^{-1}(a_\mu-a_\zeta) \,. \label{DB}
\ee
Note that $D_{B}(\mu,\zeta) \geq D_{JS}(\mu,\zeta)$, with equality when $a_\zeta=a_\mu$, and that $D_{KL}$, $D_{JS}$ and $D_B$ satisfy the following invariance property
\be\label{invariance}
D(\mu,\zeta)=D(\mu_0,\zeta[\mu]) \,,
\ee
where $\mu_0$ has zero mean and covariance $I_d$, the $d$-dimensional identity matrix, and $\zeta[\mu]$ has mean $\Sigma_\mu^{-1/2}(a_\zeta-a_\mu)$ and covariance $\Sigma_\mu^{-1/2}\Sigma_\zeta\Sigma_\mu^{-1/2}$. One may refer to \cite{NielsenN2017} for more detailed developments on connections between Bhattacharyya distance and other divergence measures.

For each of these distances, $D_{KL}(\mu,\zeta)$, $D_{JS}(\mu,\zeta)$ and $D_B(\mu,\zeta)$, equality to zero is obtained if and only if $a_\zeta=a_\mu$ and $\Sigma_\zeta=\Sigma_\mu$. When a distance $D$ satisfies this property, we shall say that $D$ distinguishes normal distributions.

\section{Distances based on optimal design criteria}\label{S:general-OD-criteria}

Optimal design of experiments rely on the maximization of a concave functional $\Phi$ of the information matrix. Below we show that some classical optimality criteria, such as A- and D-optimality, yield distance measures that are able to distinguish normal distributions, but that the usual notion of strict concavity used in optimal design theory is not enough to obtain this property.

\subsection{Construction}\label{S:construction}
Denote by $\SMM^>$ (respectively, $\SMM^\geq$) the set of $d\times d$ symmetric positive definite (respectively, non-negative definite) matrices. In Section~\ref{S:k-simplicial} we shall consider functions $\Phi$ whose properties depend on the rank of the matrices involved; $\SMM^*$ will denote a general matrix cone included in $\SMM^\geq$ such that $\Mb_1+\Mb_2\in \SMM^*$ for any $\Mb_1\in\SMM^*$ and $\Mb_2\in\SMM^\geq$. We shall consider two particular cases: $\SMM^*=\SMM^>$, and $\SMM^*=\SMM^r$, the subset of $\SMM^\geq$ containing matrices of rank at least $r \leq d$. We denote by $\SM^*$ the subset of $\SM$ containing distributions with finite covariances in $\SMM^*$, with $\SM^>$ and $\SM^r$ as particular cases associated with $\SMM^*=\SMM^>$ and $\SMM^*=\SMM^r$.

Let $\Phi$ be a function defined on $\SMM^\geq$, isotonic on $\SMM^*$ relative to the Loewner ordering ($\Phi(M_1)\geq\Phi(M_2)$ when $M_2\in\SMM^*$ and $M_1-M_2\in\SMM^\geq$) and concave on $\SMM^*$ ($\Phi[(1-\ma)M_1+\ma M_2] \geq (1-\ma)\Phi(M_1) + \ma\Phi(M_2)$ for all $\ma\in(0,1)$ and $M_1\in\SMM^*$, $M_2\in\SMM^\geq$).
Consider two probability measures $\mu$ and $\zeta$ with respective means $a_\mu$ and $a_\zeta$ and covariances $\Sigma_\mu=\var(\mu)$ and $\Sigma_\zeta=\var(\zeta)$, with $\Sigma_\mu, \Sigma_\zeta\in\SMM^*$. Following \eqref{BR}, the Burbea-Rao divergence $D_{\Phi,BR}(\mu,\zeta)$ associated with $\Phi$ is defined by
\bea 
D_{\Phi,BR}(\mu,\zeta)= \Phi\left[\var\left(\frac{\mu+\zeta}{2}\right)\right] - \frac{\Phi(\Sigma_\mu)+\Phi(\Sigma_\zeta)}{2} \,.
\eea
Direct calculation gives
\bea
\var[(\mu+\zeta)/2]=\frac12\, (\Sigma_\mu+\Sigma_\zeta)+ \frac14\, (a_\zeta-a_\mu)(a_\zeta-a_\mu)\TT\,,
\eea
so that
\be\label{BR2}
\hspace{-0.7cm} D_{\Phi,BR}(\mu,\zeta)=\Phi\left[\frac12\, (\Sigma_\mu+\Sigma_\zeta)+ \frac14\,(a_\zeta-a_\mu)(a_\zeta-a_\mu)\TT \right] - \frac{\Phi(\Sigma_\mu)+\Phi(\Sigma_\zeta)}{2} \,,
\ee
with $D_{\Phi,BR}(\mu,\zeta)\geq~0$ from the isotonicity and concavity of $\Phi$ on $\SMM^*$.

Denote now
\bea
F_\Phi(\mu,\zeta) = \lim_{\ma\ra 0^+} \frac{\Phi\{\var[(1-\ma)\mu+\ma\xi]\}-\Phi[\var(\mu)]}{\ma} \,,
\eea
see \eqref{dir-der}. For $\ma\in[0,1]$, define $\mu_{x,\ma}=(1-\ma)\mu+\ma\delta_x$, with $\delta_x$ the Dirac delta measure at $x$. Straightforward calculation gives
\bea
\frac{\mp \var[\mu_{x,\ma}]}{\mp\ma}\bigg|_{\ma=0} = (x-a_\mu)(a-a_\mu)\TT - \Sigma_\mu \,,
\eea
so that, when $\Phi$ is differentiable at $\Sigma_\mu$, with gradient $\nabla_\Phi(\Sigma_\mu)$,
\be
F_\Phi(\mu,\zeta) &=& \int F_\Phi(\mu,\delta_x) \, \dd\zeta(x) =
\int \tr\left[\nabla_\Phi(\Sigma_\mu)\frac{\mp \var[\mu_{x,\ma}]}{\mp\ma}\bigg|_{\ma=0}\right]\, \dd\zeta(x) \nonumber \\
&=& \tr[\nabla_\Phi(\Sigma_\mu)(\Sigma_\zeta-\Sigma_\mu)] + (a_\zeta-a_\mu)\TT\nabla_\Phi(\Sigma_\mu)(a_\zeta-a_\mu) \,. \label{Dirdergeneral}
\ee
Similarly to \eqref{JB}, the Jeffreys-Bregman divergence $D_{\Phi,JB}(\mu,\zeta)$ associated with $\Phi$ is then defined as
\be
D_{\Phi,JB}(\mu,\zeta) &=& \frac12\, [F_\Phi(\mu,\zeta)+F_\Phi(\zeta,\mu)] \nonumber \\
&=& \frac12\, \left[ \tr\{[\nabla_\Phi(\Sigma_\mu)-\nabla_\Phi(\Sigma_\zeta)](\Sigma_\zeta-\Sigma_\mu)\} \right. \nonumber \\
&& \left. +\, (a_\zeta-a_\mu)\TT[\nabla_\Phi(\Sigma_\mu)+\nabla_\Phi(\Sigma_\zeta)](a_\zeta-a_\mu) \right] \,. \label{JB2}
\ee
For any $z\in\mathds{R}^d$ with $\|z\|=1$, we have $\Phi(\Sigma_\mu+zz\TT)\leq \Phi(\Sigma_\mu)+ z\TT \nabla_\Phi(\Sigma_\mu) z$ from the concavity of $\Phi$ on $\SMM^*$ and $\Phi(\Sigma_\mu+zz\TT)\geq \Phi(\Sigma_\mu)$ from its isotonicity. Therefore, $z\TT \nabla_\Phi(\Sigma_\mu) z\geq 0$, and $\nabla_\Phi(\Sigma_\mu)\in\SMM^\geq$. Similarly, $\nabla_\Phi(\Sigma_\zeta)\in\SMM^\geq$, showing that the second term in \eqref{JB2} is non-negative. Concavity on $\SMM^*$ also implies
$\Phi(\Sigma_\zeta)\leq \Phi(\Sigma_\mu)+\tr[\nabla_\Phi(\Sigma_\mu)(\Sigma_\zeta-\Sigma_\mu)]$ and $\Phi(\Sigma_\mu)\leq \Phi(\Sigma_\zeta)+\tr[\nabla_\Phi(\Sigma_\zeta)(\Sigma_\mu-\Sigma_\zeta)]$, which gives $\tr\{[\nabla_\Phi(\Sigma_\mu)-\nabla_\Phi(\Sigma_\zeta)](\Sigma_\zeta-\Sigma_\mu)\}\geq 0$. Therefore, $D_{\Phi,JB}(\mu,\zeta)\geq 0$.

Below we investigate which additional conditions must be imposed on $\Phi$ to ensure that $D_{\Phi,BR}$ \eqref{BR2} and $D_{\Phi,JB}$ \eqref{JB2} distinguish normal distributions in $\SM^*$.

\subsection{Sufficient conditions for distinguishability}\label{S:distinguishability}
We say that $\Phi$ is positively homogeneous when
\bea 
\Phi(\ma M)=\ma\, \Phi(M) \mbox{ for any } \ma>0 \mbox{ and } M\in\SMM^\geq \,,
\eea
and we shall say that $\Phi$ is strictly isotonic on $\SMM^*$ when
\bea
\Phi(M_1)>\Phi(M_2) \mbox{ for any } M_1,M_2 \mbox{ such that } M_1-M_2\in\SMM^\geq,\, M_2\in\SMM^*,\, M_2 \neq M_1 \,.
\eea
In optimal design of experiments, a function $\Phi$ is said to be strictly concave on the cone $\SMM^*\subset\SMM^\geq$ when
\be
\Phi[(1-\ma)M_1+\ma M_2] &>& (1-\ma)\Phi(M_1) + \ma\,\Phi(M_2) \nonumber \\
&& \mbox{ for all } \ma\in(0,1)\,, \ M_1\in\SMM^* \mbox{ and } M_2\in\SMM^\geq \label{Concavity1}\\
&& \mbox{ with } M_2\neq 0 \mbox{ and } M_2 \mbox{ not proportional to } M_1 \,. \nonumber
\ee
When $\SMM^*=\SMM^>$, this definition coincides with that in \cite[Sect.~5.2]{Pukelsheim93}.
%
The usual definition in convex analysis is stronger and requires the inequality to be valid for a wider class of matrices $M_2$. We shall call \emph{strongly strictly} concave on $\SMM^*$ a function $\Phi$ such that
\be
\Phi[(1-\ma)M_1+\ma M_2] &>& (1-\ma)\Phi(M_1) + \ma\,\Phi(M_2) \nonumber \\
&& \mbox{ for all } \ma\in(0,1)\,, \ M_1\in\SMM^* \mbox{ and } M_2\in\SMM^\geq \label{Concavity2}\\
&& \mbox{ with } M_2\neq 0 \mbox{ and } M_2 \neq M_1 \,. \nonumber
\ee
The following property shows that $D_{\Phi,BR}$ \eqref{BR2} and $D_{\Phi,JB}$ \eqref{JB2} distinguish normal distributions in $\SM^*$ when $\Phi$ is strictly isotonic and strongly strictly concave on $\SMM^*$.

\begin{lemma}\label{L1}
Let $\Phi$ be a strictly isotonic and strongly strictly concave function on $\SMM^*$. Then, for $\mu$ and $\zeta$ two probability measures with respective means $a_\mu$ and $a_\zeta$ and covariances $\Sigma_\mu=\var(\mu)$ and $\Sigma_\zeta=\var(\zeta)$, $\Sigma_\mu, \Sigma_\zeta \in\SMM^*$, we have
\be
D_{\Phi,BR}(\mu,\zeta)=0 \Rightarrow a_\mu=a_\zeta \mbox{ and } \Sigma_\mu=\Sigma_\zeta \,, \label{BR-disting}
\ee
with $D_{\Phi,BR}$ given by \eqref{BR2}, and, when $\Phi$ is differentiable at $\Sigma_\mu$ and $\Sigma_\zeta$,
\be
D_{\Phi,JB}(\mu,\zeta)=0 \Rightarrow a_\mu=a_\zeta \mbox{ and } \Sigma_\mu=\Sigma_\zeta \,, \label{JB-disting}
\ee
where $D_{\Phi,JB}$ is defined by \eqref{JB2}.
\end{lemma}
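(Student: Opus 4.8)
The plan is to handle the two implications \eqref{BR-disting} and \eqref{JB-disting} separately, exploiting in each case a decomposition of the divergence into a non-negative ``mean part'' — killed by strict isotonicity — and a non-negative ``covariance part'' — killed by strong strict concavity. Throughout I will use that $\SMM^*$ is a convex cone containing $\Sigma_\mu$ and $\Sigma_\zeta$, so that $M:=(\Sigma_\mu+\Sigma_\zeta)/2$ and $M+\tfrac14 bb\TT$ (with $b:=a_\zeta-a_\mu$) again lie in $\SMM^*$, and that for the cones of interest ($\SMM^>$ and $\SMM^r$ with $r\ge 1$) one has $0\notin\SMM^*$, so $\Sigma_\mu$ and $\Sigma_\zeta$ are nonzero.

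For \eqref{BR-disting} I would rewrite \eqref{BR2} as
\[
D_{\Phi,BR}(\mu,\zeta)=\Big(\Phi\big[M+\tfrac14 bb\TT\big]-\Phi[M]\Big)+\Big(\Phi[M]-\tfrac12\Phi(\Sigma_\mu)-\tfrac12\Phi(\Sigma_\zeta)\Big),
\]
a sum of two non-negative terms (the first by isotonicity, the second by midpoint concavity of $\Phi$ on $\SMM^*$). Vanishing of the sum forces each term to vanish. For the first term, strict isotonicity — applied to $M+\tfrac14 bb\TT$ and $M\in\SMM^*$, which differ precisely when $b\ne 0$ — shows the term is strictly positive unless $b=0$. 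For the second term, strong strict concavity \eqref{Concavity2} with $\ma=\tfrac12$, $M_1=\Sigma_\mu\in\SMM^*$, $M_2=\Sigma_\zeta\in\SMM^\geq\setminus\{0\}$ shows it is strictly positive unless $\Sigma_\zeta=\Sigma_\mu$. Hence $D_{\Phi,BR}(\mu,\zeta)=0$ yields $a_\mu=a_\zeta$ and $\Sigma_\mu=\Sigma_\zeta$. It is at this last step that ordinary strict concavity \eqref{Concavity1} fails to suffice: $\Sigma_\zeta$ can be a scalar multiple of $\Sigma_\mu$ other than $\Sigma_\mu$ itself, and \eqref{Concavity1} says nothing in that case whereas \eqref{Concavity2} does.

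For \eqref{JB-disting} I would start from the fact, already established just before the lemma, that the trace term and the quadratic term in \eqref{JB2} are separately non-negative, so that $D_{\Phi,JB}(\mu,\zeta)=0$ forces each to vanish. First I would upgrade the bound $\nabla_\Phi(\Sigma_\mu)\in\SMM^\geq$ to $\nabla_\Phi(\Sigma_\mu)\in\SMM^>$: in the sandwich $\Phi(\Sigma_\mu)\le\Phi(\Sigma_\mu+zz\TT)\le\Phi(\Sigma_\mu)+z\TT\nabla_\Phi(\Sigma_\mu)z$ the left inequality is now strict for $z\ne 0$ by strict isotonicity (since $zz\TT\in\SMM^\geq\setminus\{0\}$), giving $z\TT\nabla_\Phi(\Sigma_\mu)z>0$; likewise $\nabla_\Phi(\Sigma_\zeta)\in\SMM^>$. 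Thus $\nabla_\Phi(\Sigma_\mu)+\nabla_\Phi(\Sigma_\zeta)$ is positive definite and vanishing of the quadratic term forces $a_\zeta-a_\mu=0$. For the trace term, its vanishing means both first-order (tangent) inequalities $\Phi(\Sigma_\zeta)\le\Phi(\Sigma_\mu)+\tr[\nabla_\Phi(\Sigma_\mu)(\Sigma_\zeta-\Sigma_\mu)]$ and $\Phi(\Sigma_\mu)\le\Phi(\Sigma_\zeta)+\tr[\nabla_\Phi(\Sigma_\zeta)(\Sigma_\mu-\Sigma_\zeta)]$ hold with equality (their non-negative slacks sum to the trace term). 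Using the first equality inside the concavity inequality along $t\mapsto\Sigma_\mu+t(\Sigma_\zeta-\Sigma_\mu)\in\SMM^*$, together with the tangent bound at $\Sigma_\mu$, pins down $\Phi[(1-t)\Sigma_\mu+t\Sigma_\zeta]=(1-t)\Phi(\Sigma_\mu)+t\Phi(\Sigma_\zeta)$ for every $t\in[0,1]$; at $t=\tfrac12$ this is the midpoint identity, and strong strict concavity (again with $M_2=\Sigma_\zeta\ne 0$) then forces $\Sigma_\mu=\Sigma_\zeta$.

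The step I expect to be the main obstacle is this last one in the $D_{\Phi,JB}$ argument — passing from equality in the single first-order inequality at $\Sigma_\zeta$ to affineness of $\Phi$ along the whole segment $[\Sigma_\mu,\Sigma_\zeta]$ — since it is the only place where concavity has to be used twice (the tangent lies above, the chord lies below) before strong strict concavity can be brought in. Everything else is bookkeeping: verifying the cone memberships needed to invoke isotonicity and concavity, and keeping straight that strict isotonicity disposes of the mean component while strong strict concavity disposes of the covariance component.
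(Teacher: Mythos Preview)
Your proposal is correct and follows essentially the same decomposition and logic as the paper's proof: strict isotonicity handles the mean part, strong strict concavity handles the covariance part, and for $D_{\Phi,JB}$ you upgrade $\nabla_\Phi(\Sigma_\mu)\in\SMM^\geq$ to $\SMM^>$ via the same sandwich argument.

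The one place where you take a slightly longer route is the trace term in \eqref{JB-disting}. The paper dispatches it in one line: strong strict concavity of a differentiable $\Phi$ gives directly the \emph{strict} tangent inequalities $\Phi(\Sigma_\zeta)<\Phi(\Sigma_\mu)+\tr[\nabla_\Phi(\Sigma_\mu)(\Sigma_\zeta-\Sigma_\mu)]$ and its symmetric counterpart whenever $\Sigma_\zeta\neq\Sigma_\mu$, so the trace term is strictly positive. Your segment-affineness argument (tangent bound above, chord below, hence equality on $[\Sigma_\mu,\Sigma_\zeta]$, then contradict \eqref{Concavity2} at $t=1/2$) is exactly the standard proof of that fact, so what you flag as the ``main obstacle'' is in fact a detour you need not take if you invoke the strict first-order inequality directly. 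Either way the argument is sound.
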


\begin{proof}
We first prove that $D_{\Phi,BR}(\mu,\zeta)=0$, or $D_{\Phi,JB}(\mu,\zeta)=0$, implies $a_\mu=a_\zeta$.
Suppose that $a_\mu \neq a_\zeta$. The strict isotonicity of $\Phi$ on $\SMM^*$ implies $\Phi[(\Sigma_\mu+\Sigma_\zeta)/2+ (a_\zeta-a_\mu)(a_\zeta-a_\mu)\TT/4 ]>\Phi[(\Sigma_\mu+\Sigma_\zeta)/2]$, and therefore $D_{\Phi,BR}(\mu,\zeta)>0$ from concavity.
Take any $z\in\mathds{R}^d$ with $\|z\|=1$. We have $\Phi(\Sigma_\mu+zz\TT) \leq \Phi(\Sigma_\mu)+ z\TT \nabla_\Phi(\Sigma_\mu) z$ from the concavity of $\Phi$ and $\Phi(\Sigma_\mu+zz\TT)> \Phi(\Sigma_\mu)$ from its strict isotonicity. Therefore, $z\TT \nabla_\Phi(\Sigma_\mu) z> 0$, and $\nabla_\Phi(\Sigma_\mu)\in\SMM^>$. Similarly, $\nabla_\Phi(\Sigma_\zeta)\in\SMM^>$, showing that $(a_\zeta-a_\mu)\TT[\nabla_\Phi(\Sigma_\mu)+\nabla_\Phi(\Sigma_\zeta)](a_\zeta-a_\mu)$ in \eqref{JB2} is strictly positive.

We consider now two distributions such that $a_\mu=a_\zeta$ and $\Sigma_\mu \neq \Sigma_\zeta$.
Since $\Phi$ is strongly strictly concave on $\SMM^*$, $\Phi[(\Sigma_\mu+\Sigma_\zeta)/2] > [\Phi(\Sigma_\mu)+\Phi(\Sigma_\zeta)]/2$ for $\Sigma_\zeta \neq \Sigma_\mu$, which concludes the proof of \eqref{BR-disting}. Also, for $\Sigma_\zeta \neq \Sigma_\mu$ we have $\Phi(\Sigma_\zeta)<\Phi(\Sigma_\mu)+\tr[\nabla_\Phi(\Sigma_\mu)(\Sigma_\zeta-\Sigma_\mu)]$ and $\Phi(\Sigma_\mu)<\Phi(\Sigma_\zeta)+\tr[\nabla_\Phi(\Sigma_\zeta)(\Sigma_\mu-\Sigma_\zeta)]$, so that $\tr\{[\nabla_\Phi(\Sigma_\mu)-\nabla_\Phi(\Sigma_\zeta)](\Sigma_\zeta-\Sigma_\mu)\}> 0$ and $D_{\Phi,JB}(\mu,\zeta)> 0$, which proves~\eqref{JB-disting}.
\carre
\end{proof}

A positively homogeneous function $\Phi$ is not strongly strictly concave. Indeed, take $M_2=\beta M_1$, with $M_1\in\SMM^*$, $\beta>0$ and $\beta\neq 1$.
We have $\Phi[(1-\ma)M_1+\ma M_2]=\Phi[(1-\ma+\ma\beta)M_1]=(1-\ma+\ma\beta)\Phi(M_1)=(1-\ma)\Phi(M_1) + \ma\,\Phi(M_2)$. An important consequence is that the Burbea-Rao and Jeffreys-Bregman divergences associated with a strictly concave (in the sense of \eqref{Concavity1}) and positively homogeneous function $\Phi$ are unable to distinguish normal distributions. Take $\mu$ and $\zeta$ such that $a_\mu=a_\zeta$ and $\Sigma_\zeta = \beta\, \Sigma_\mu$, $\beta>0$ and $\beta\neq 1$. One can readily check that $D_{\Phi,BR}=0$, see \eqref{BR2}. Also, when $\Phi$ is differentiable at $\Sigma_\mu$, then $\nabla_\Phi(\Sigma_\mu)=\nabla_\Phi(\Sigma_\zeta)$ and $D_{\Phi,JB}=0$, see \eqref{JB2}. In contrast, the following property shows that $D_{\Phi,BR}$ and $D_{\Phi,JB}$ do distinguish normal distributions when using $\log\Phi$ instead of $\Phi$.

\begin{lemma}\label{L2}
Let $\Phi$ be a function positively homogeneous, non identically zero, strictly isotonic on $\SMM^>$, and strictly concave in the sense of \eqref{Concavity1}. Then,
for $\mu$ and $\zeta$ two probability measures with respective means $a_\mu$ and $a_\zeta$ and covariances $\Sigma_\mu=\var(\mu)$ and $\Sigma_\zeta=\var(\zeta)$, $\Sigma_\mu, \Sigma_\zeta \in\SMM^>$, we have
\bea
D_{\log\Phi,BR}(\mu,\zeta)=0 \Rightarrow a_\mu=a_\zeta \mbox{ and } \Sigma_\mu=\Sigma_\zeta \,, 
\eea
and when $\Phi$ is differentiable at $\Sigma_\mu$ and $\Sigma_\zeta$,
\bea
D_{\log\Phi,JB}(\mu,\zeta)=0 \Rightarrow a_\mu=a_\zeta \mbox{ and } \Sigma_\mu=\Sigma_\zeta \,. 
\eea
\end{lemma}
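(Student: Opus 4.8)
The plan is to deduce Lemma~\ref{L2} from Lemma~\ref{L1} applied to the function $\Psi=\log\Phi$ on the cone $\SMM^*=\SMM^>$. The whole argument thus reduces to three things: that $\log\Phi$ is well defined on $\SMM^>$, that it is strictly isotonic and \emph{strongly} strictly concave there, and that it inherits differentiability from $\Phi$.

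First I would check that $\Phi(M)>0$ for every $M\in\SMM^>$, so that $\log\Phi$ makes sense on $\SMM^>$. This is immediate from positive homogeneity and strict isotonicity: since $2M-M=M\in\SMM^\geq$ with $M\in\SMM^>$ and $2M\neq M$, strict isotonicity gives $\Phi(2M)>\Phi(M)$, while positive homogeneity gives $\Phi(2M)=2\Phi(M)$, whence $\Phi(M)>0$ (in particular the hypothesis $\Phi\not\equiv 0$ is subsumed). Strict isotonicity of $\log\Phi$ on $\SMM^>$ then follows at once, being the composition of the strictly isotonic $\Phi$ with the strictly increasing function $\log$.

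The core step is the strong strict concavity of $\log\Phi$. Fix $\ma\in(0,1)$, $M_1\in\SMM^>$, $M_2\in\SMM^\geq$ with $M_2\neq 0$ and $M_2\neq M_1$, and put $M_\ma=(1-\ma)M_1+\ma M_2\in\SMM^>$. If $M_2$ is \emph{not} proportional to $M_1$, the strict concavity of $\Phi$ in the sense of \eqref{Concavity1} gives $\Phi(M_\ma)>(1-\ma)\Phi(M_1)+\ma\Phi(M_2)$, and applying $\log$, which is strictly increasing and concave, yields
\bea
\log\Phi(M_\ma) > \log\left[(1-\ma)\Phi(M_1)+\ma\Phi(M_2)\right] \geq (1-\ma)\log\Phi(M_1)+\ma\log\Phi(M_2)\,.
\eea
If instead $M_2=\beta M_1$ with $\beta>0$, $\beta\neq 1$ (here $\beta>0$ because $M_2\in\SMM^\geq\setminus\{0\}$ and $M_1\in\SMM^>$, and $\beta\neq 1$ because $M_2\neq M_1$), positive homogeneity gives the \emph{equality} $\Phi(M_\ma)=(1-\ma+\ma\beta)\Phi(M_1)=(1-\ma)\Phi(M_1)+\ma\Phi(M_2)$, and now it is the \emph{strict} concavity of $\log$ that does the work, because $\Phi(M_2)=\beta\Phi(M_1)\neq\Phi(M_1)$ (using $\Phi(M_1)>0$):
\bea
\log\Phi(M_\ma) = \log\left[(1-\ma)\Phi(M_1)+\ma\Phi(M_2)\right] > (1-\ma)\log\Phi(M_1)+\ma\log\Phi(M_2)\,.
\eea
Hence $\log\Phi$ is strongly strictly concave on $\SMM^>$. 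This case split is the only delicate point, and it is exactly where the construction earns its keep: $\Phi$ itself is never strongly strictly concave, the obstruction being precisely the rays $M_2\propto M_1$, but composition with the strictly concave $\log$ repairs the inequality on those rays while preserving it off them.

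Finally, when $\Phi$ is differentiable at $\Sigma_\mu,\Sigma_\zeta\in\SMM^>$, so is $\log\Phi$, with $\nabla_{\log\Phi}(\Sigma)=\nabla_\Phi(\Sigma)/\Phi(\Sigma)$ since $\Phi(\Sigma)>0$. Noting that $\var[(\mu+\zeta)/2]=\frac12(\Sigma_\mu+\Sigma_\zeta)+\frac14(a_\zeta-a_\mu)(a_\zeta-a_\mu)\TT\succeq\frac12\Sigma_\mu\succ 0$ stays in $\SMM^>$, Lemma~\ref{L1} applied to $\Psi=\log\Phi$ on $\SMM^*=\SMM^>$ gives $D_{\log\Phi,BR}(\mu,\zeta)=0\Rightarrow a_\mu=a_\zeta$ and $\Sigma_\mu=\Sigma_\zeta$, and likewise for $D_{\log\Phi,JB}$, which is the claim.
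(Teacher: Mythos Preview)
Your proof is correct and follows the same overall strategy as the paper: reduce to Lemma~\ref{L1} by verifying that $\log\Phi$ is well defined, strictly isotonic, and strongly strictly concave on $\SMM^>$, and inherits differentiability from $\Phi$. The only notable difference is in the concavity step: you split at the outset into the cases $M_2\not\propto M_1$ (where the strict inequality comes from \eqref{Concavity1} for $\Phi$) and $M_2=\beta M_1$, $\beta\neq 1$ (where it comes from the strict concavity of $\log$ applied to the distinct values $\Phi(M_1)\neq\Phi(M_2)$), whereas the paper writes a single chain $\log\Phi(M_\ma)\geq\log[(1-\ma)\Phi(M_1)+\ma\Phi(M_2)]\geq(1-\ma)\log\Phi(M_1)+\ma\log\Phi(M_2)$ and then analyzes when equality holds throughout, reducing to $M_2=\beta M_1$ and checking by a calculus argument that $f(\beta)=\log(1-\ma+\ma\beta)-\ma\log\beta$ vanishes only at $\beta=1$. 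Your case split is a bit more direct and avoids that computation; your self-contained argument for $\Phi(M)>0$ (via $\Phi(2M)=2\Phi(M)>\Phi(M)$) is also a nice touch where the paper simply cites Pukelsheim.
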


\begin{proof} First note that $\Phi(\Sigma_\mu)>0$ and $\Phi(\Sigma_\zeta)>0$ since $\Sigma_\mu, \Sigma_\zeta \in\SMM^>$, see \cite[Chap.~5]{Pukelsheim93}, so that $\log\Phi(\Sigma_\mu)$ and $\log\Phi(\Sigma_\zeta)$ are well defined. Also, when $\Phi$ is differentiable at $\Sigma$, $\log\Phi$ is differentiable too, with
$\nabla_{\log\Phi}(\Sigma)=\nabla_\Phi(\Sigma)/\Phi(\Sigma)$.

Using Lemma~\ref{L1}, we only need to show that $\log\Phi$ is a strictly isotonic and strongly strictly concave function on $\SMM^>$. Strict isotonicity follows from the fact that the logarithm is increasing. Consider now \eqref{Concavity2}. Take any $M_1\in\SMM^>$ and $M_2\in\SMM^\geq$, $M_2\neq 0$, and any $\ma\in(0,1)$. We can write
\be
\log\Phi[(1-\ma)M_1+\ma M_2] &\geq& \log[(1-\ma)\Phi(M_1) + \ma\,\Phi(M_2)] \nonumber \\
&& \geq (1-\ma)\log\Phi(M_1) + \ma\,\log\Phi(M_2) \,, \label{T1}
\ee
where the first inequality follows from the concavity of $\Phi$ and the second from the concavity of logarithm. From the monotonicity of logarithm and the strict concavity of $\Phi$ in the sense of \eqref{Concavity1}, equality between the two extreme terms implies $M_2=\beta\,M_1$ for some $\beta>0$. Since $\Phi$ is positively homogeneous, $\log\Phi[(1-\ma)M_1+\ma M_2]=(1-\ma)\log\Phi(M_1) + \ma\,\log\Phi(M_2)$ then gives $f(\beta)=\log(1-\ma+\ma\beta)-\ma\log(\beta)=0$. Direct calculation gives $\dd f(\beta)/\dd\beta=\ma[1/(1-\ma+\ma\beta)-1/\beta]$, showing that, for any $\ma\in(0,1)$, $f(\beta)$ has a unique minimum at $\beta=\beta_*=1$, with $f(\beta_*)=0$. Equality in \eqref{T1} thus implies $M_2=\beta_*\, M_1=M_1$, which proves \eqref{Concavity2}.
\carre
\end{proof}

\subsection{Optimal-design criteria}\label{S:ODcriteria}

Consider Kiefer's \cite{Kiefer74} $\varphi_p$-class of functions, $p\in\mathds{R}\cup\{-\infty,+\infty\}$, which defines a family of design criteria widely used in optimal design. For any $M\in\SMM^\geq$, $\varphi_p(M)$ is defined by
\bea 
\varphi_p(M)=\left\{
\begin{array}{ll}
\ml_{\max}(M) & \mbox{for } p = \infty \,, \\
\left[\frac1d\, \tr(M^p)\right]^{1/p} & \mbox{for } p\neq 0 \mbox{ and } p\neq \pm\infty \,,\\
\det^{1/d}(M) & \mbox{for } p = 0 \,, \\
\ml_{\min}(M) & \mbox{for } p = -\infty \,,
\end{array}
\right.
\eea
with $\varphi_p(M)=0$ if $M$ is singular when $p\leq 0$. A-optimal design corresponds to $p=-1$, D-optimal design to $p=0$ and E-optimal design to $p=-\infty$; $\varphi_p(I_d)=1$ for all $p$.
All $\varphi_p$ are positively homogeneous;
for $p\in(-\infty,1)$, $\varphi_p$ is differentiable and strictly isotonic on $\SMM^>$, and strictly concave in the sense of \eqref{Concavity1}, see \cite[Sect.~6.13]{Pukelsheim93}.
Lemma~\ref{L2} applies, and the Burbea-Rao and Jeffreys-Bregman divergences associated with $\log\varphi_p$, $p\in(-\infty,1)$, distinguish normal distributions in $\SM^>$. However, as Section~\ref{S:Application} will illustrate, distances associated with negative $p$ are very sensitive to the presence of small eigenvalues in the spectrum of covariances matrices, and are therefore not recommended. In contrast, the presence of zero eigenvalues $\ml_i(M)$ has little influence when $p>0$ as $\varphi_p(M)=[(1/d) \sum_{i: \ml_i(M)>0} \ml_i^p(M)]^{1/p}$.

The $\varphi_p$ are information functions and therefore satisfy $\tr\left[\nabla_{\varphi_p}(M)M\right]=\varphi_p(M)$ for $M\in\SMM^>$, see \cite[p.~168]{Pukelsheim93}, and we obtain
\bea
D_{\log\varphi_p,JB}(\mu,\zeta) &=& \frac12\, \left\{ \tr\left[\frac{\nabla_{\varphi_p}(\Sigma_\mu)}{\varphi_p(\Sigma_\mu)}\Sigma_\zeta\right]+
\tr\left[\frac{\nabla_{\varphi_p}(\Sigma_\zeta)}{\varphi_p(\Sigma_\zeta)}\Sigma_\mu\right]  \right\} \nonumber \\
&& + \frac12\, \left\{ (a_\zeta-a_\mu)\TT \left[\frac{\nabla_{\varphi_p}(\Sigma_\mu)}{\varphi_p(\Sigma_\mu)}+\frac{\nabla_{\varphi_p}(\Sigma_\zeta)}{\varphi_p(\Sigma_\zeta)}\right] (a_\zeta-a_\mu) \right\} - 1\,. \eea
For $p\neq 0$, we get
\be
D_{\log\varphi_p,JB}(\mu,\zeta) &=& \frac12\, \left[ \frac{\tr(\Sigma_\mu^{p-1}\Sigma_\zeta)}{\tr(\Sigma_\mu^p)} +
\frac{\tr(\Sigma_\zeta^{p-1}\Sigma_\mu)}{\tr(\Sigma_\zeta^p)} \right] \nonumber \\
&& \hspace{-1cm}  + \frac12 \, (a_\zeta-a_\mu)\TT\left(\frac{\Sigma_\mu^{p-1}}{\tr(\Sigma_\mu^p)}+\frac{\Sigma_\zeta^{p-1}}{\tr(\Sigma_\zeta^p)}\right)(a_\zeta-a_\mu) - 1 \,. \label{JB-phip}
\ee
This expression is also valid when $p=0$ with the convention $\tr(M^0)=d$. In general, \eqref{BR2} does not yield a simple expression for $D_{\log\varphi_p,BR}(\mu,\zeta)$.
For $p=0$, $\varphi_0(\Sigma)=\det^{1/d}(\Sigma)$ is directly related to the Shannon entropy $H_0$ of a normal distribution with covariance $\Sigma$, and we have
\bea
D_{\log\varphi_0,JB}(\mu,\zeta)= \frac2d\, D_{KL}(\mu,\zeta) \ \mbox{ and } \  D_{\log\varphi_0,BR}(\mu,\zeta)= \frac2d\, D_{JS}(\mu,\zeta)\,,
\eea
with $D_{KL}$ and $D_{JS}$ respectively given by  \eqref{DKL} and \eqref{DJS}. In general, $D_{\log\varphi_p,JB}$ and $D_{\log\varphi_p,BR}$ with $p\neq 0$ do not satisfy the invariance property \eqref{invariance}.

\section{$k$-th order simplicial distances}\label{S:k-simplicial}

\subsection{Squared volumes of $k$-dimensional simplices}

In a recent paper \cite{PWZ-2018-jmva}, we considered simplicial distances induced by the dispersion functionals
\be\label{phi_k0}
\phi_k(\mu)= \Ex_\mu\{ \SV_k^2(X_0,\ldots,X_{k}) \}\,,
\ee
where $\SV_k(X_0,X_1,\ldots,X_{k})$ is  the volume of the $k$-dimensional simplex (its length when $k=1$ and area when $k=2$) formed by the $k+1$ vertices $X_0,X_1,\ldots,X_{k}$ assumed to be i.i.d.\ with $\mu$ in $\mathds{R}^d$. In particular, for $k=1$ we have
$$
\phi_1(\mu) = \int\int \|x_1-x_2\|^2\, \mu(\dd x_1)\mu(\dd x_2) = 2\, \tr[\Sigma_\mu] \,,
$$
twice the trace of the covariance matrix of $\mu$. As shown below, when $k=d$ we get $\phi_d(\mu)=(d+1)/d!\, \det(\Sigma_\mu)$, which is proportional to the generalised variance widely used in multivariate statistics.

For any $M\in\SMM^\geq$, define
\be\label{psi_k}
\Phi_k(M)=\frac{k+1}{k!} \, e_k[\Lambda(M)] \,
\ee
with $\Lambda(M)$ the set of eigenvalues of $M$ and $e_k$ the elementary symmetric function of degree $k$ (with $e_0=1$). The following theorem is proved in \cite{PWZ2017}.

\begin{thm}
 \label{Th:Bernoulli}
For any $k\in\{1,\ldots,d\}$ and $\mu\in\SM$, we have $\phi_k(\mu)= \Phi_k[\var(\mu)]$. Moreover, the functional $\phi_k^{1/k}$ is concave on $\SM$.
\end{thm}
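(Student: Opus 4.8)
The plan is to prove the two assertions of Theorem~\ref{Th:Bernoulli} separately: first the identity $\phi_k(\mu)=\Phi_k[\var(\mu)]$, then the concavity of $\phi_k^{1/k}$.

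For the identity, I would start from the classical Cayley--Menger-type formula expressing the squared volume of a $k$-simplex as a Gram determinant: if $X_0,\ldots,X_k$ are the vertices, then
\bea
\SV_k^2(X_0,\ldots,X_k)=\frac{1}{(k!)^2}\,\det\left[(X_i-X_0)\TT(X_j-X_0)\right]_{i,j=1}^k \,.
\eea
Expanding this determinant by multilinearity and taking the expectation over i.i.d.\ copies with law $\mu$, one wants to show the result depends only on $\Sigma_\mu=\var(\mu)$ and equals $\frac{k+1}{k!}\,e_k[\Lambda(\Sigma_\mu)]$. By the invariance of volume under translation one may assume $a_\mu=\0b$; by invariance under orthogonal transformations (volumes are preserved and $\var$ transforms by conjugation, which does not change the eigenvalues) one may further diagonalise $\Sigma_\mu=\diag(\ml_1,\ldots,\ml_d)$. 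A convenient device is to use the Binet--Cauchy formula: writing the $d\times(k+1)$ matrix of (centred) vertices, $\SV_k^2$ is a sum over $k$-subsets $S\subset\{1,\ldots,d\}$ of squared volumes of projections onto the coordinate subspace indexed by $S$, and after taking expectations each such term contributes $\frac{k+1}{k!}\prod_{i\in S}\ml_i$ (the one-dimensional-factors being independent). Summing over all $S$ gives exactly $\frac{k+1}{k!}\,e_k(\ml_1,\ldots,\ml_d)$. The combinatorial bookkeeping — tracking the coefficient $(k+1)$ that comes from the choice of which vertex plays the role of the ``base point'' $X_0$, and checking the normalisation against the known cases $k=1$ ($\phi_1=2\tr\Sigma_\mu$) and $k=d$ ($\phi_d=\frac{d+1}{d!}\det\Sigma_\mu$) — is the fiddly part, but it is routine.

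For the concavity of $\phi_k^{1/k}$, the key observation is that $\phi_k$ itself is \emph{not} concave in $\mu$ (it is a degree-$(k+1)$ polynomial-type functional in the moments) but its $k$-th root is. I would exploit the mixture structure: for $\mu=(1-\ma)\mu_1+\ma\mu_2$, the vertices $X_0,\ldots,X_k$ are i.i.d.\ from $\mu$, so conditioning on which of the two components each vertex is drawn from partitions the expectation $\phi_k(\mu)$ into $2^{k+1}$ terms indexed by subsets of the vertex set. The crucial sub-fact is a determinant (Gram/mixed-volume) inequality: for the expected squared simplicial volume, the relevant multilinear expansion is dominated, after taking $k$-th roots, by the corresponding convex combination — this is an instance of the fact that $M\mapsto (e_k[\Lambda(M)])^{1/k}$ is concave on $\SMM^\geq$ (a Newton/Maclaurin-type concavity of elementary symmetric functions of eigenvalues, e.g.\ via the Alexandrov--Fenchel inequalities or the fact that $e_k^{1/k}$ is a ``mixed discriminant'' power). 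Given the identity $\phi_k=\Phi_k\circ\var$ and the affine (in the sense $\var[(1-\ma)\mu_1+\ma\mu_2] = (1-\ma)\var(\mu_1)+\ma\var(\mu_2) + \ma(1-\ma)(a_{\mu_1}-a_{\mu_2})(a_{\mu_1}-a_{\mu_2})\TT$) behaviour of the covariance under mixing, together with isotonicity of $e_k$ on $\SMM^\geq$, the concavity of $\phi_k^{1/k}$ reduces to: (i) $\var$ of a mixture Loewner-dominates the convex combination of the $\var$'s, (ii) $e_k[\Lambda(\cdot)]^{1/k}$ is isotone and concave on $\SMM^\geq$. Step (ii) is the main obstacle — it is the genuine piece of convex-geometry input — and I would either cite the concavity of mixed discriminants / Alexandrov's inequalities or give a short self-contained argument via the representation $e_k[\Lambda(M)]=\binom{d}{k}\,D(\underbrace{M,\ldots,M}_{k},\underbrace{I_d,\ldots,I_d}_{d-k})$ in terms of the mixed discriminant $D$ and the known concavity of $M\mapsto D(M,\ldots,M,I,\ldots,I)^{1/k}$.

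I expect the write-up to be short once these two ingredients are in place: combine the Binet--Cauchy expansion for the identity with the mixed-discriminant concavity for the second claim, and note that the cases $k=1$ and $k=d$ serve as consistency checks. The part most likely to need care in a full proof, beyond citing \cite{PWZ2017}, is making the $2^{k+1}$-term mixture decomposition interact cleanly with the $k$-th root — i.e.\ verifying that no cross terms spoil the superadditivity — which is precisely why passing through the matrix functional $\Phi_k$ and using $\var$-superadditivity under mixtures is the cleanest route.
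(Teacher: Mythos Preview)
The paper does not prove Theorem~\ref{Th:Bernoulli}; it simply states the result and cites \cite{PWZ2017} for the proof. There is therefore no in-paper argument to compare your proposal against.

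That said, your strategy is essentially sound. For the identity, the Binet--Cauchy reduction to full-dimensional simplices in each coordinate $k$-subspace is the right idea, but your parenthetical justification ``the one-dimensional factors being independent'' is misleading: diagonalising $\Sigma_\mu$ does \emph{not} make the coordinates of $X$ independent. What you actually need is the full-dimensional case $\Ex\{\SV_k^2\}=\frac{k+1}{k!}\det(\Sigma_\nu)$ for an arbitrary measure $\nu$ on $\mathds{R}^k$ (which you list only as a ``consistency check''), applied to the pushforward of $\mu$ under the coordinate projection $\pi_S$; then the classical identity $\sum_{|S|=k}\det(\Sigma_\mu[S,S])=e_k[\Lambda(\Sigma_\mu)]$ finishes the job, and no diagonalisation is needed. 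For the concavity, your second route --- combining the Loewner super-additivity of $\var$ under mixtures with the isotonicity and concavity of $M\mapsto e_k[\Lambda(M)]^{1/k}$ on $\SMM^\geq$ --- is clean and correct; the $2^{k+1}$-term mixture expansion you mention first is unnecessary and, as you suspect, does not interact well with the $k$-th root.
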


The $\Phi_k^{1/k}$, $k\in\{1,\ldots,d\}$, form a family of criteria between $\varphi_1=\Phi_1/(2d)$ and $\varphi_0= [d!/(d+1)]^{1/d}\ \Phi_d^{1/d}$. On the one hand, similarly to $\varphi_p$ with positive $p$, $\Phi_k(M)$ with $k$ small enough is relatively insensitive to the presence of small eigenvalues in $\Lambda(M)$. On the other hand, $\Phi_k(M)>0$ if and only if $M\in\SMM^k$ (i.e., $\rank(M)\geq k$), which makes the $\Phi_k$ more sensitive to the true dimensionality of the data than the $\varphi_p$ for $p\in[0,1]$.

\vsp
The expressions of $\Phi_k(M)$ and its gradient $\nabla_{\Phi_k}(M)$ at $M\in\SMM^\geq$ are given by
\bea
\Phi_k(M) &=& \frac{k+1}{k\, k!}\, \sum_{i=0}^{k-1} (-1)^{i-1}\, e_{k-i}[\Lambda(M)]\, \tr(M^i) \,,\\
\nabla_{\Phi_k}(M) &=& \frac{k+1}{k!}\, \sum_{i=0}^{k-1} (-1)^i\, e_{k-i-1}[\Lambda(M)]\, M^i\,,
\eea
see \cite{Lopez-FR-D98-MODA}, \cite{RodriguezDLF2003}.
In \cite{PWZ2017}, we show that the directional derivative of $\phi_k$ at $\mu$ in the direction $\zeta$ is given by \eqref{Dirdergeneral}, with the additional property $\tr[\nabla_{\Phi_k}(M)M]=k\, \Phi_k(M)$, $M\in\SMM^\geq$, which gives
\be\label{F_Phik}
F_{\Phi_k}(\mu,\zeta) =  \tr[\nabla_{\Phi_k}(\Sigma_\mu)\Sigma_\zeta] + (a_\zeta-a_\mu)\TT\nabla_{\Phi_k}(\Sigma_\mu)(a_\zeta-a_\mu) -k\, \Phi_k(\Sigma_\mu)\,.
\ee
One can readily check that $\Phi_k^{1/k}$ is positively homogeneous, it is therefore not strongly strictly concave, see Section~\ref{S:general-OD-criteria}. However, $\Phi_k^{1/k}$ is strictly isotonic on $\SMM^k$ \cite[Lemma~3]{PWZ-2018-jmva} and strictly concave in the sense of \eqref{Concavity1}  for $k\geq 2$ \cite[Lemma~6]{PWZ-2018-jmva}. Arguments similar to those in the proof of Lemma~\ref{L2} indicate that $\log\Phi_k$ is strictly isotonic and strongly strictly concave on $\SMM^k$ for $k\geq 2$.
The following property is then a consequence of Lemma~\ref{L1}.

\begin{thm}
Let $\mu$ and $\zeta$ be two probability measures with respective means $a_\mu$ and $a_\zeta$ and covariances $\Sigma_\mu=\var(\mu)$ and $\Sigma_\zeta=\var(\zeta)$, $\Sigma_\mu, \Sigma_\zeta \in\SMM^k$. Then,
\bea
D_{\log\Phi_k,BR}(\mu,\zeta)=0 &\Rightarrow& a_\mu=a_\zeta \mbox{ and } \Sigma_\mu=\Sigma_\zeta \,, \\
D_{\log\Phi_k,JB}(\mu,\zeta)=0 &\Rightarrow& a_\mu=a_\zeta \mbox{ and } \Sigma_\mu=\Sigma_\zeta \,.
\eea
\end{thm}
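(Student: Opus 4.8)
The plan is to obtain both implications as a direct application of Lemma~\ref{L1}, taken with $\Phi=\log\Phi_k$ and the matrix cone $\SMM^*=\SMM^k$; everything then reduces to checking that $\log\Phi_k$ is well defined, differentiable, strictly isotonic and strongly strictly concave on $\SMM^k$. In accordance with the remark preceding the statement I take $k\geq 2$; the case $k=1$, where $\Phi_1\propto\tr$, is genuinely excluded (for instance, in dimension $d\geq 2$ any $\mu,\zeta$ with $a_\mu=a_\zeta$ and $\tr\Sigma_\mu=\tr\Sigma_\zeta$ but $\Sigma_\mu\neq\Sigma_\zeta$ satisfy $D_{\log\Phi_1,BR}(\mu,\zeta)=0$).

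I would first record two preliminary facts. (i) $\SMM^k$ is an admissible cone in the sense of Section~\ref{S:construction}: for $M_1\in\SMM^k$ and $M_2\in\SMM^\geq$, both matrices being non-negative definite one has $\ker(M_1+M_2)\subseteq\ker M_1$, so $\rank(M_1+M_2)\geq\rank M_1\geq k$ and $M_1+M_2\in\SMM^k$, and $\SMM^k$ is stable under positive scaling; in particular $\var[(\mu+\zeta)/2]$ and the convex combinations of $\Sigma_\mu,\Sigma_\zeta$ appearing in $D_{\log\Phi_k,BR}$ and $D_{\log\Phi_k,JB}$ stay in $\SMM^k$. (ii) $\Phi_k(M)=\frac{k+1}{k!}\,e_k[\Lambda(M)]>0$ exactly when $M$ has at least $k$ positive eigenvalues, that is, when $M\in\SMM^k$; since $\Phi_k$ is polynomial in the entries of $M$ it is everywhere differentiable, so $\log\Phi_k$ is well defined and differentiable on $\SMM^k$, with $\nabla_{\log\Phi_k}(\Sigma)=\nabla_{\Phi_k}(\Sigma)/\Phi_k(\Sigma)$. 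Strict isotonicity of $\log\Phi_k$ on $\SMM^k$ then follows at once from the strict isotonicity of $\Phi_k^{1/k}$ on $\SMM^k$, established in \cite[Lemma~3]{PWZ-2018-jmva}, together with the monotonicity of the logarithm.

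The heart of the matter, and the step to watch, is the strong strict concavity of $\log\Phi_k$ on $\SMM^k$ for $k\geq 2$: this is exactly the proof of Lemma~\ref{L2} carried over from $\SMM^>$ to the cone $\SMM^k$, the delicate point being to keep every logarithm applied to a strictly positive quantity. Write $\log\Phi_k=k\,\log\Psi$ with $\Psi:=\Phi_k^{1/k}$, which by Theorem~\ref{Th:Bernoulli} and \cite[Lemmas~3 and~6]{PWZ-2018-jmva} is positively homogeneous, non identically zero, concave on $\SMM^\geq$, strictly isotonic on $\SMM^k$, and, because $k\geq 2$, strictly concave in the sense of \eqref{Concavity1}. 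One then reproduces the argument of Lemma~\ref{L2}: given $M_1\in\SMM^k$, $M_2\in\SMM^\geq$ with $M_2\neq 0$ and $M_2\neq M_1$, and $\ma\in(0,1)$, if $M_2\notin\SMM^k$ then $\Psi(M_2)=0$ while $(1-\ma)M_1+\ma M_2\in\SMM^k$ has $\Psi>0$, so the strict inequality \eqref{Concavity2} is immediate; and if $M_2\in\SMM^k$, concavity of $\Psi$ followed by concavity of the logarithm gives $\log\Psi[(1-\ma)M_1+\ma M_2]\geq(1-\ma)\log\Psi(M_1)+\ma\log\Psi(M_2)$, while equality forces first $M_2=\beta M_1$ for some $\beta>0$ by \eqref{Concavity1} and then, by positive homogeneity, $f(\beta)=\log(1-\ma+\ma\beta)-\ma\log\beta=0$; since $f'(\beta)=\ma\,[1/(1-\ma+\ma\beta)-1/\beta]$ shows $f$ has its unique minimum $f(1)=0$ at $\beta=1$, this yields $M_2=M_1$, a contradiction. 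Hence $\log\Psi$, and therefore $\log\Phi_k$, is strongly strictly concave on $\SMM^k$.

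Finally, $\log\Phi_k$ being strictly isotonic, strongly strictly concave and differentiable on the admissible cone $\SMM^k$ and $\Sigma_\mu,\Sigma_\zeta$ lying in $\SMM^k$, Lemma~\ref{L1} applies and yields that $D_{\log\Phi_k,BR}(\mu,\zeta)=0$ implies $a_\mu=a_\zeta$ and $\Sigma_\mu=\Sigma_\zeta$, and, by the differentiability just noted, that $D_{\log\Phi_k,JB}(\mu,\zeta)=0$ implies $a_\mu=a_\zeta$ and $\Sigma_\mu=\Sigma_\zeta$, which is the assertion. Apart from reusing Lemmas~\ref{L1} and~\ref{L2}, the only real care needed is the bookkeeping of the cone stability of $\SMM^k$, so that all the logarithms entering $D_{\log\Phi_k,BR}$, $D_{\log\Phi_k,JB}$ and the proof of Lemma~\ref{L1} are of positive numbers.
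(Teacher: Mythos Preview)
Your proof is correct and follows exactly the route the paper sketches: the paper states that $\Phi_k^{1/k}$ is positively homogeneous, strictly isotonic on $\SMM^k$ and strictly concave in the sense of \eqref{Concavity1} for $k\geq 2$, then says ``Arguments similar to those in the proof of Lemma~\ref{L2} indicate that $\log\Phi_k$ is strictly isotonic and strongly strictly concave on $\SMM^k$ for $k\geq 2$. The following property is then a consequence of Lemma~\ref{L1}.'' You have simply written out those ``similar arguments'' in full, including the cone stability of $\SMM^k$ and the case split $M_2\in\SMM^k$ versus $M_2\notin\SMM^k$ (where $\log\Psi(M_2)=-\infty$), which the paper leaves implicit.
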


Using \eqref{F_Phik}, we obtain that $D_{\log\Phi_k,JB}(\mu,\zeta)$ corresponds to the simplicial distance between $\mu$ and $\zeta$ introduced in \cite{PWZ-2018-jmva},
\be
D_{\log\Phi_k,JB}(\mu,\zeta) &=& \frac12\, \left\{ \tr\left[\frac{\nabla_{\Phi_k}(\Sigma_\mu)}{\Phi_k(\Sigma_\mu)}\Sigma_\zeta\right]+
\tr\left[\frac{\nabla_{\Phi_k}(\Sigma_\zeta)}{\Phi_k(\Sigma_\zeta)}\Sigma_\mu\right]  \right\} \nonumber \\
&& \hspace{-1cm} + \frac12\, \left\{ (a_\zeta-a_\mu)\TT \left[\frac{\nabla_{\Phi_k}(\Sigma_\mu)}{\Phi_k(\Sigma_\mu)}+\frac{\nabla_{\Phi_k}(\Sigma_\zeta)}{\Phi_k(\Sigma_\zeta)}\right] (a_\zeta-a_\mu) \right\} - k\,. \label{JB-phik}
\ee
When $k=d$, we have $D_{\log\Phi_d,JB}(\mu,\zeta)=d\, D_{\log\varphi_0,JB}(\mu,\zeta)$ and $D_{\log\Phi_d,BR}(\mu,\zeta)=d\, D_{\log\varphi_0,BR}(\mu,\zeta)$, see Section~\ref{S:general-OD-criteria}. In general, $D_{\log\Phi_k,JB}$ and $D_{\log\Phi_k,BR}$ with $k\neq d$ do not satisfy the invariance property \eqref{invariance}.
In \cite{PWZ-2018-jmva}, we show that the gradient matrix $\nabla_{\Phi_k}(M)$ is non-negative definite for any $M\in\SMM^\geq$ and any $k\in\{1,\ldots,d\}$, and is positive definite when $M\in\SMM^k$ . Moreover, $\nabla_{\Phi_k}(M)/\Phi_k(M)$ is the inverse of $M$ when $\rank(M)=k=d$ and is a generalized inverse of $M$ when $\rank(M)=k<d$.
If we write the characteristic polynomial of $M$ as
\bea
\det(\ml I_d -M) = c_1\, \ml^d + c_2\, \ml^{d-1} + \cdots + c_d\, \ml + c_{d+1} \,,
\eea
with $c_1=1$, then
\bea
\Phi_k(M) &=& (-1)^k\, \frac{k+1}{k!}\, c_{k+1} \,, \\
\nabla_{\Phi_k}(M) &=& (-1)^{k-1}\frac{k+1}{k!}\, (M^{k-1}+c_2\,M^{k-2}+\cdots+c_k \,I_d) \,.
\eea

\subsection{Other simplicial functionals}

By considering other powers than 2 in \eqref{phi_k0}, we can obtain simplicial functionals that depend on the full measure $\mu$ and not only on its covariance matrix $\Sigma_\mu$. In particular, we may obtain divergence measures that define semi-metrics, i.e., that satisfy
\be\label{LI}
 \mbox{ for any } \mu,\zeta\in\SM, \ D(\mu,\zeta)=0 \Leftrightarrow \mu=\zeta\,.
\ee

Consider in particular the dispersion measure
$$
\phi_{1,\delta}(\mu)=\Ex_\mu\{ \SV_1^\delta(X_0,X_{1})\} =\int\int \|x_0-x_1\|^\delta \, \mu(\dd x_0)\mu(\dd x_1) \,,
$$
see \cite{Bjorck56}, \cite{PWZ2016-SP}.
Direct calculation shows that its directional derivative $F_{\phi_{1,\delta}}(\mu,\zeta)$ is
\begin{eqnarray*}
F_{\phi_{1,\delta}}(\mu,\zeta) &=& 2 \int\int \|x_0-x_1\|^\delta \, (\xi-\mu)(\dd x_0)\mu(\dd x_1) \\
&=& 2 \left[\int\int \|x_0-x_1\|^\delta \, \xi(\dd x_0)\mu(\dd x_1)-\phi_{1,\delta}(\mu)\right]\,,
\end{eqnarray*}
where the first term on the right-hand side, $\int\int \|x_0-x_1\|^\delta \, \xi(\dd x_0)\mu(\dd x_1)$, corresponds to {\L}ukaszyk-Karmowski metric, see \cite{Lukaszyk2004}.
The corresponding Jeffreys-Bregman divergence is
$$
D_{\phi_{1,\delta},JB}(\mu,\zeta)=- \int\int \|x_0-x_1\|^\delta \, (\zeta-\mu)(\dd x_0)(\zeta-\mu)(\dd x_1) \,.
$$
It is called energy distance for $\delta=1$ and generalized energy distance \cite{SzekelyR2013} for $\delta\in(0,2]$.
The functional $\phi_{1,\delta}$ is concave for $\delta\in(0,2]$, strictly concave for $\delta\in(0,2)$, and the kernel $K(x_0,x_1)=-\|x_0-x_1\|^\delta$ is conditionally integrally strictly positive definite for $\delta\in(0,2)$; see \cite{schilling2012bernstein}, \cite{SriperumbudurGFSL2010}. Then, $D_{\phi_{1,\delta},JB}(\mu,\zeta)>0$ for two probability measures $\mu \neq \zeta$ having finite energy, i.e., such that $\int\int -\|x_0-x_1\|^\delta \, \mu(\dd x_0) \mu(\dd x_1) < +\infty$ and $\int\int -\|x_0-x_1\|^\delta \, \zeta(\dd x_0) \zeta(\dd x_1) < +\infty$.

Other conditionally integrally strictly positive definite kernels $K(\cdot,\cdot)$ yield strictly concave measures of dispersion
$\phi_K(\mu)=\int\int -K(x_0,x_1)  \, \mu(\dd x_0) \mu(\dd x_1)$ for probability measures, and the associated  Jeffreys-Bregman divergence is
$$
D_{\phi_K,JB}(\mu,\zeta)= \int\int K(x_0,x_1) \, (\zeta-\mu)(\dd x_0)(\zeta-\mu)(\dd x_1) \,,
$$
which corresponds to the (squared) maximum mean discrepancy between $\zeta$ and $\mu$, as defined in \cite{SejdinovicSGF2013}.
Uniformly bounded kernels are characteristic, i.e., are such that $D_{\phi_K,JB}$ satisfies \eqref{LI}, if and only if they are conditionally integrally strictly positive definite; see \cite{SriperumbudurGFSL2010}.
The question of whether simplicial dispersion functionals $\phi_{k,\delta}(\mu)= \Ex_\mu\{ \SV_k^\delta(X_0,\ldots,X_{k}) \}$ with $k\geq 2$ and $\delta\in(0,2)$ may define characteristic kernels remains an open issue.

\section{Application: testing the equality between means and covariances}\label{S:Application}

We illustrate the behaviour of the distances presented in previous sections by considering the situation where one wishes to test whether two distributions $\mu$ and $\zeta$ have the same mean and covariance, using empirical data.
We denote
\bea
\hat a_{\mu,n}^{(i)}=\frac1n\, \sum_{k=1}^n X_k^{(i)} \mbox{ and } \hat \Sigma_{\mu,n}^{(i)}=\frac{1}{n-1}\, \sum_{k=1}^n (X_k^{(i)}-a_{\mu,n}^{(i)})(X_k^{(i)}-a_{\mu,n}^{(i)})\TT
\eea
the sample mean and covariance matrix for a sample $\Xb_n^{(i)}=\{X_1,\ldots,X_n\}$ of $n$ independent $d$-dimensional vectors distributed with $\mu$, and similarly $\hat a_{\zeta,m}^{(i)}$ and $\hat \Sigma_{\zeta,m}^{(i)}$ for a sample $\Yb_m^{(i)}$ of $m$ independent vectors distributed with $\zeta$. We denote by $D(\hat\mu_n^{(i)},\hat\zeta_m^{(i)})$ the distance $D$ computed with the empirical values $\hat a_{\mu,n}^{(i)}$, $\hat \Sigma_{\mu,n}^{(i)}$, $\hat a_{\zeta,m}^{(i)}$ and $\hat \Sigma_{\zeta,m}^{(i)}$.

\subsection{ROC curves}\label{S:ROC curves}

Suppose we have $N$ pairs of independent samples $\Xb_n^{(i)}$ and $\Yb_m^{(i)}$, $i=1,2,\ldots,N$, respectively distributed $\SN(a_\mu,\Sigma_\mu)$ and $\SN(a_\zeta,\Sigma_\zeta)$.
Each pair $(\Xb_n^{(i)},\Yb_m^{(i)})$ yields an empirical distance $D(\hat\mu_n^{(i)},\hat\zeta_m^{(i)})$, with $D$ one of the distances considered above, and the $N$ pairs give an empirical estimate of the c.d.f.\ $\Fx_1$ of $D(\hat\mu_n,\hat\zeta_m)$.
Similarly, pairs $(\Xb_n^{(i)},\Xb_n^{(j)})$ yield an empirical estimate of the c.d.f.\ $\Fx_0$ of $D(\hat\mu_n,\hat\mu_n)$.

Denote by H0 the hypothesis that two given samples $\Xb_n$ and $\Yb_m$ have the same mean and covariance and H1 the hypothesis that they have different means and/or covariances.  A standard statistical test based on $D$ would compare the distance calculated for the empirical estimates $\hat a_{\mu,n}$, $\hat \Sigma_{\mu,n}$, $\hat a_{\zeta,m}$ and $\hat \Sigma_{\zeta,m}$ to some critical value $\tau$.
A plot of $1-\Fx_1$ against $1-\Fx_0$ gives the Receiver Operating Characteristic (ROC) curve for the test. It shows the value of the true positive rate against the false positive rate at various threshold settings $\tau$, and the power of the test as a function of the type-1 error of the decision rule. The Area Under the ROC Curve (AUC) gives a scalar figure of merit for the performance of the test considered.

\begin{example}\label{Ex:1}
We use pairs of samples with equal size $n=m=200$ in dimension $d=20$. Detection of different means is far easier than detection of slightly different covariances, and we take $a_\mu=a_\zeta=(1,\ldots,1)\TT$. The covariances are
\be\label{Sigma-ex1}
\Sigma_\mu=\left(
           \begin{array}{cc}
             A & 0\\
             0 & 10^{-3}\,I_{d-2} \\
           \end{array}
         \right)  \ \mbox{ and } \
\Sigma_\zeta=\left(
           \begin{array}{cc}
            \ma\, A & 0\\
             0 & 10^{-3}\,I_{d-2} \\
           \end{array}
         \right)\,,
\ee
with $A=\left(
          \begin{array}{cc}
            2 & -1 \\
            -1 & 2 \\
          \end{array}
        \right)$
and $I_{d-2}$ the $(d-2)$-dimensional identity matrix. The empirical estimates of the c.d.f.\ $\Fx_0$ and $\Fx_1$ are built from $N=1,000$ pairs of normal samples.

The left panel of Figure~\ref{F:ROC1} presents the ROC curve obtained when $\ma=1.4$ in \eqref{Sigma-ex1}, for Bhattacharyya distance $D_B$ \eqref{DB} (dashed line, bottom), $D_{\log\varphi_p,JB}$ \eqref{JB-phip} with $p=1/2$ (dotted line) and $D_{\log\Phi_k,JB}$ \eqref{JB-phik} with $k=3$ (solid line).
The right panel of Figure~\ref{F:ROC1} shows the AUC as $\ma$ varies between 1 and 2 for these three distances.
The curves obtained with Burbea-Rao divergence $D_{\log\Phi_3,BR}$ cannot be visually distinguished from those obtained with $D_{\log\Phi_3,JB}$.
Note the similar behaviours observed for $D_{\log\varphi_{1/2},JB}$ and $D_{\log\Phi_3,JB}$ on this example, both performing much better than $D_B$.
The curves obtained with $D_{\log\varphi_0,JB}$ or $D_{\log\Phi_d,JB}$ (not shown) are hardly distinguishable form those with $D_B$; distances $D_{\log\varphi_p,JB}$ with $p<0$ perform very poorly due to the high sensitivity to the presence of small eigenvalues in the spectra of $\Sigma_\mu$ and $\Sigma_\zeta$.
\fin
\end{example}

\begin{figure}[ht!]
\begin{center}
\includegraphics[width=.49\linewidth]{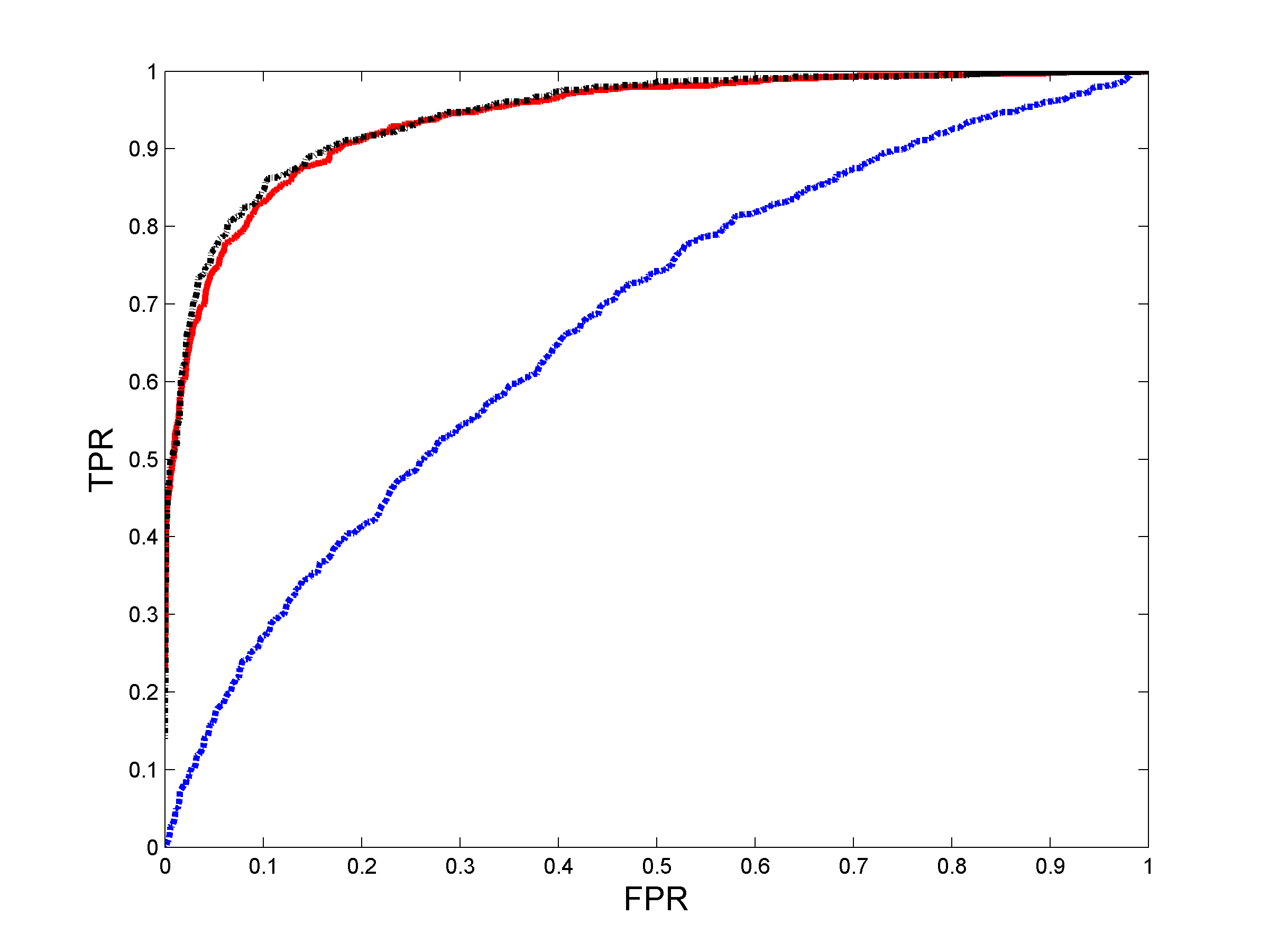} \includegraphics[width=.49\linewidth]{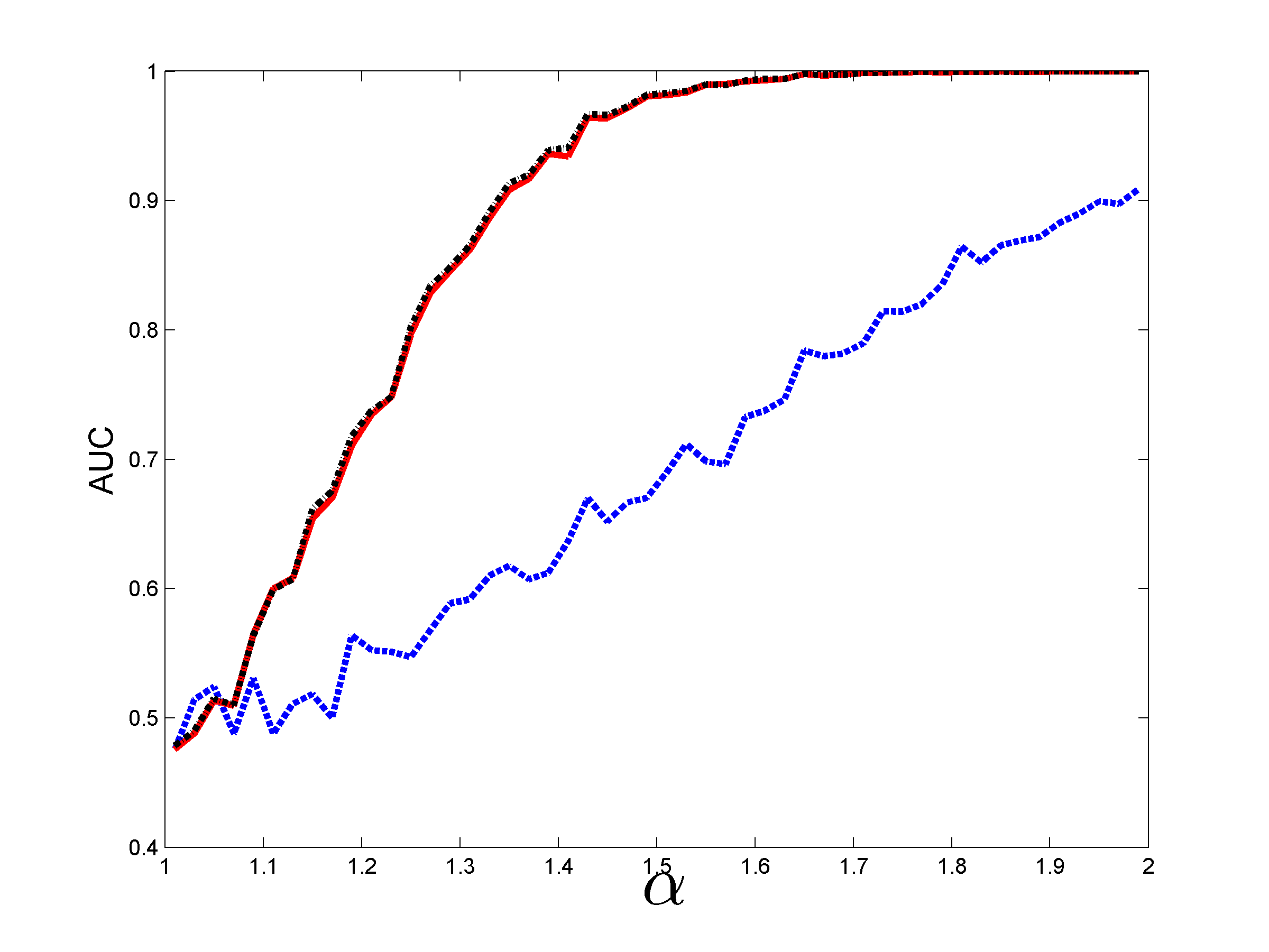}
\end{center}
\caption{\small Left: ROC curve for Bhattacharyya distance $D_B$ \eqref{DB} (dashed line, bottom), $D_{\log\varphi_{1/2},JB}$ \eqref{JB-phip} (dotted line) and $D_{\log\Phi_3,JB}$ \eqref{JB-phik} (solid line) when $\ma=1.4$ in \eqref{Sigma-ex1}. Right: Area Under the ROC Curve as a function of $\ma$.}\label{F:ROC1}
\end{figure}

As the next example illustrate, the ranking of the different methods is not always the same as in Example~\ref{Ex:1}.

\begin{example} \label{Ex:2}
We slightly modify the setting of Example~\ref{Ex:1}, and consider now covariances given by
\be\label{Sigma-ex2}
\Sigma_\mu=\left(
           \begin{array}{cc}
             A & 0\\
             0 & I_{d-2} \\
           \end{array}
         \right)  \ \mbox{ and } \
\Sigma_\zeta=\left(
           \begin{array}{cc}
            R_\theta A R_\theta\TT& 0\\
             0 & I_{d-2} \\
           \end{array}
         \right)\,,
\ee
with $A$ as in \eqref{Sigma-ex1} and $R_\theta$ the rotation matrix
$$
R_\theta = \left(
           \begin{array}{cc}
            \cos(\theta)& \sin(\theta)\\
             -\sin(\theta) & \cos(\theta) \\
           \end{array}
         \right) \,.
$$
We still have $a_\mu=a_\zeta=(1,\ldots,1)\TT$, $n=m=200$ and $d=20$.
The left panel of Figure~\ref{F:ROC2} presents the ROC curve obtained when $\theta=\pi/16$ in \eqref{Sigma-ex2}, for Bhattacharyya distance $D_B$, $D_{\log\varphi_{1/2},JB}$ and $D_{\log\Phi_3,JB}$, with the same colour code as in Figure~\ref{F:ROC1}.
The right panel of Figure~\ref{F:ROC2} shows the AUC as $\theta$ varies between 0 and $\pi/4$ for these three distances.
Again, the curves obtained with Burbea-Rao divergence $D_{\log\Phi_3,BR}$ cannot be visually distinguished from those obtained with $D_{\log\Phi_3,JB}$, and the curves obtained with $D_{\log\varphi_0,JB}$ (or $D_{\log\Phi_d,JB}$) are hardly distinguishable form those with $D_B$.
The three distances $D_B$, $D_{\log\varphi_{1/2},JB}$ and $D_{\log\Phi_3,JB}$ yield now different performances, with $D_B$ performing best, notably better than $D_{\log\Phi_3,JB}$ in particular.
\fin
\end{example}

Examples~\ref{Ex:1} and \ref{Ex:2} show the importance of being able to choose a suitable $k$ in $\{2,\ldots,d\}$ for $\log\Phi_k$, or a suitable $p$ in $[0,1)$ for $\log\varphi_p$. This is considered in the next section.

\begin{figure}[ht!]
\begin{center}
\includegraphics[width=.49\linewidth]{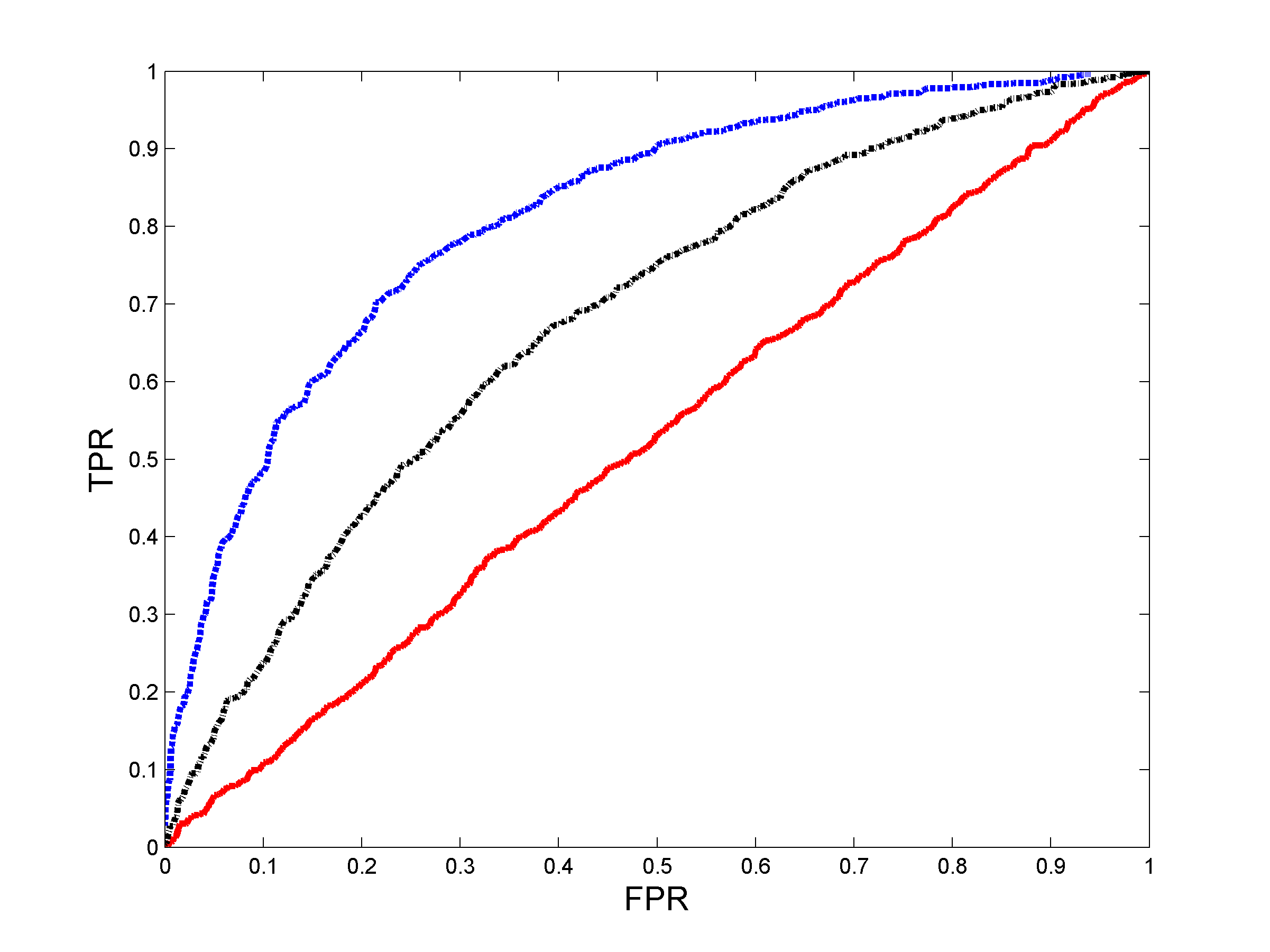} \includegraphics[width=.49\linewidth]{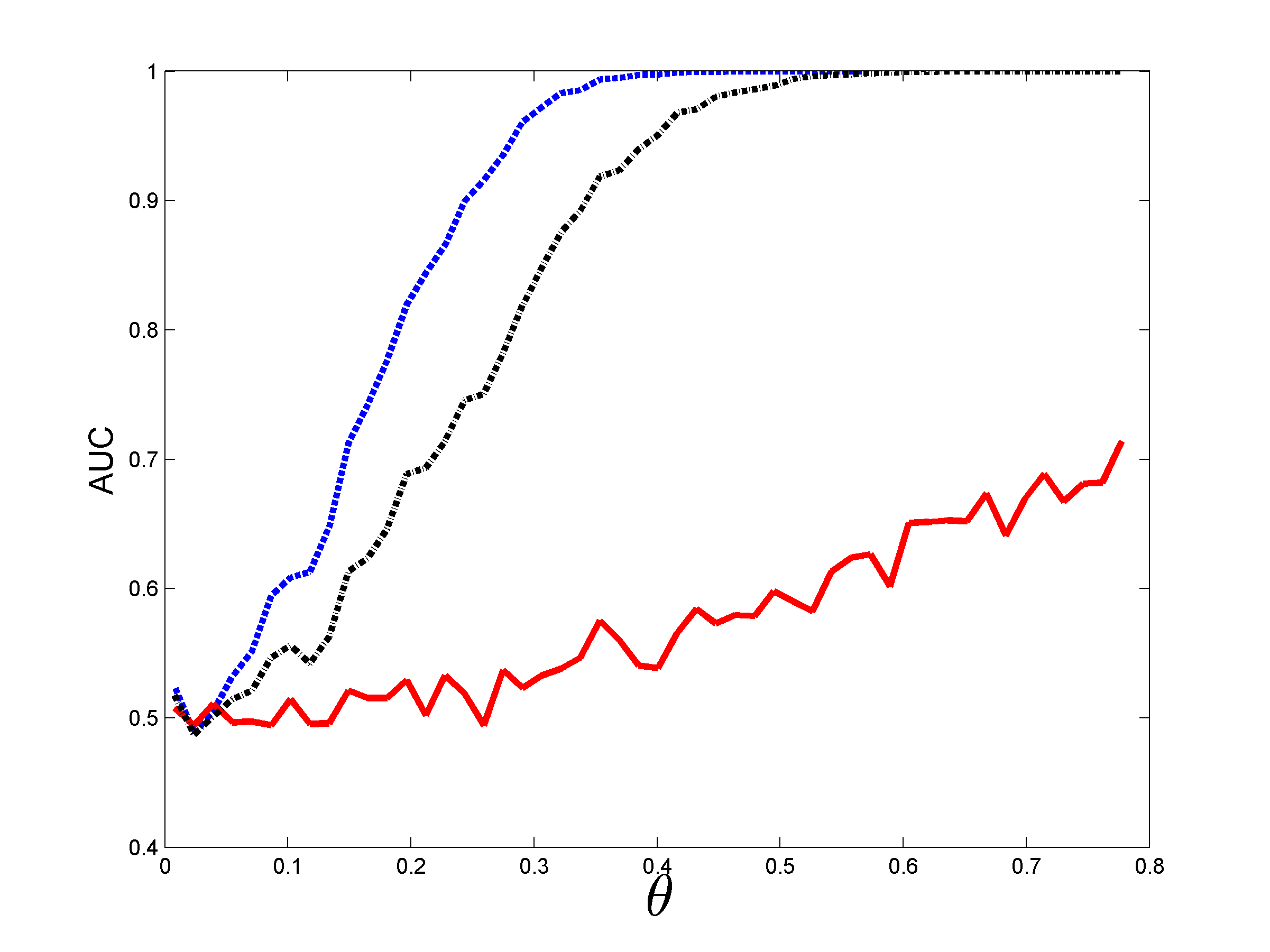}
\end{center}
\caption{\small Left: ROC curve for Bhattacharyya distance $D_B$ \eqref{DB} (dashed line, top), $D_{\log\varphi_{1/2},JB}$ \eqref{JB-phip} (dotted line, middle) and $D_{\log\Phi_3,JB}$ \eqref{JB-phik} (solid line, bottom) when $\theta=\pi/16 \approx 0.196$ in \eqref{Sigma-ex2}. Right: Area Under the ROC Curve as a function of $\theta\in[0,\pi/4]$.}\label{F:ROC2}
\end{figure}

\subsection{Choosing $k$ in $\log\Phi_k$ and $p$ in $\log\varphi_p$}\label{S:choose k}

Ideally, for testing identity between means and covariance matrices of two distributions from one sample of each, $\Xb_n$ and $\Yb_m$ say, one may use different distances and combine the test statistics obtained, $p$-values for instance. Here, we shall consider a naive approach where we first select a value $k_*$ for $k$ for distances based on $\log\Phi_k$, or $p_*$ for $p$ for distances based on $\log\varphi_p$, and then use the corresponding $k_*$, or $p_*$, in the testing procedure. A consequence of using such a simple approach is that we shall have little control of the type-I error. However, the implementation of a more precise and rigorous method would require sophisticated developments out of the scope of this paper.

%

When only one pair of samples, $\Xb_n$ and $\Yb_m$, is available, we can nevertheless generate $N$ pairs of pseudo samples from $(\Xb_n,\Yb_m)$ and use the approach of Section~\ref{S:ROC curves} to evaluate the AUC under the ROC curve for each distance considered, for several choices of $k$ and $p$. For a distance based on $\log\Phi_k$ (respectively, $\log\varphi_p$), the value $k_*$ (respectively, $p_*$) that yields the largest AUC is then selected for testing the identity between the distributions that generated the two samples $\Xb_n$ and $\Yb_m$.

For instance, we may generate pairs of pseudo samples by bootstrap. For the estimation of $\Fx_1$, each $\Xb_{n}^{(i,1)}$ (respectively, $\Yb_{m}^{(i,1)}$) is obtained by sampling with replacement within $\Xb_n$ (respectively, $\Yb_m$). For the estimation of $\Fx_0$, for each $i$ we first merge $\Xb_n$ and $\Yb_n$ into $\Zb_n=\{\Xb_n, \Yb_n\}$, then we randomly select $n$ points from $\Zb_n$, within which we sample with replacement to obtain  $\Xb_{n}^{(i,0)}$ and sample with replacement within the $m$ other points of $\Zb_n$ to obtain $\Yb_{m}^{(i,0)}$. This construction ensures that there is no intersection between $\Xb_{n}^{(i,0)}$ and $\Yb_{m}^{(i,0)}$, so that the pairs $(\Xb_{n}^{(i,0)}, \Yb_{n}^{(i,0)})$ do not look artificially too similar compared to the $(\Xb_{n}^{(i,1)}, \Yb_{n}^{(i,1)})$.

We experimentally found that sampling without replacement, as described hereafter, gives better results. Take $n'=n-r$ and $m'=m-r$, 
with $r$
sufficiently large to induce enough variability among pseudo samples. For the estimation of $\Fx_1$, each $\Xb_{n'}^{(i,1)}$ (respectively, $\Yb_{m'}^{(i,1)}$) is given by $n'$ points randomly selected within $\Xb_n$ (respectively, of $m'$ points selected within $\Yb_m$). For the estimation of $\Fx_0$, we first merge $\Xb_n$ and $\Yb_n$ into $\Zb_n=\{\Xb_n, \Yb_n\}$;  then we randomly select $n'$ points from $\Zb_n$ to form  $\Xb_{n'}^{(i,0)}$ and select $m'$ points from the remaining $n+m-n'$ points of $\Zb_n$ to form $\Yb_{m'}^{(i,0)}$. This construction ensures that there are no repetitions of points within $\Xb_{n'}^{(i,0)}$ and $\Yb_{m'}^{(i,0)}$ and no intersection between them. The value of $r$ does not need to be large: with $n=100$, $r=5$ already gives more that $75\times10^6$ different choices for $\Xb_{n'}^{(i,0)}$.


\vsp
\noindent \emph{Examples~\ref{Ex:1} and \ref{Ex:2} (continued)}
We consider again the situation of Example~\ref{Ex:1}, and draw two samples $\Xb_n$ and $\Yb_n$ from $\SN(a_\mu,\Sigma_\mu)$ and  $\SN(a_\zeta,\Sigma_\zeta)$, respectively, with $n=200$ and $\ma=1.4$ in \eqref{Sigma-ex1}. The left panel of Figure~\ref{F:select-ROC1} shows the AUC under the ROC curve for $D_{\log\Phi_k,BR}$ as a function of $k$, constructed according to the procedure above with $N=n$ and $r=5$.
The optimal $k$ is here $k_*=5$; the value of $k_*$ fluctuates depending on the random samples $\Xb_n$ and $\Yb_n$ that are drawn, with $k_*\leq 6$ in about 90\% of the cases.
The right panel of Figure~\ref{F:select-ROC1} shows the AUC under the ROC curve for $D_{\log\varphi_p,BR}$ as a function of $p$. The optimal $p_*$ varies with $\Xb_n$ and $\Yb_n$ but remains larger than 1/2 in about 90\% of the cases.
These observations suggest that in this example distances based on $\log\Phi_k$ (respectively, based on $\log\varphi_p$) perform better with small $k$ than with large $k$ (respectively, with large $p$ than with small $p$), 
which is confirmed by Figure~\ref{F:ROC1}.

\begin{figure}[ht!]
\begin{center}
\includegraphics[width=.49\linewidth]{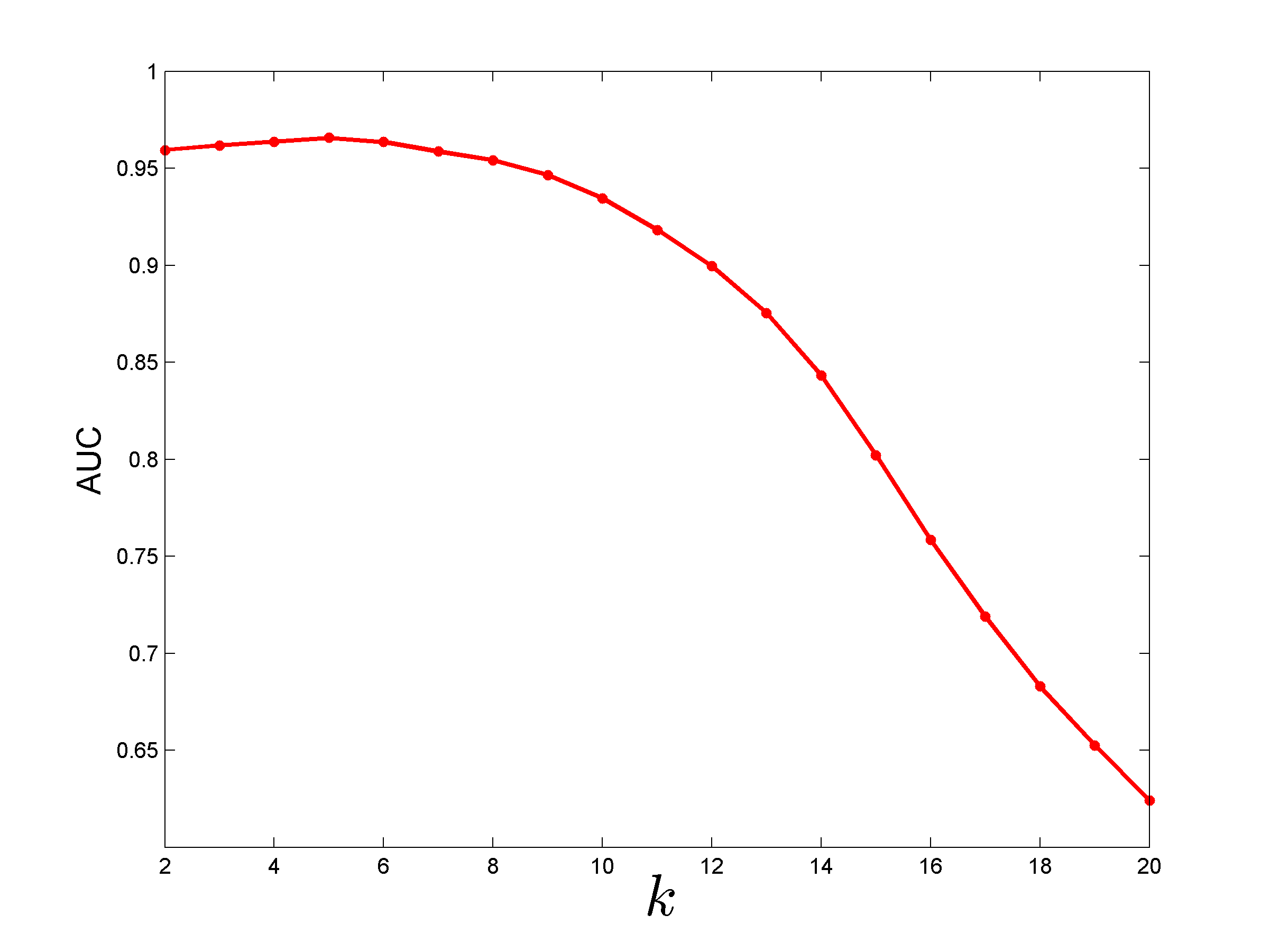} \includegraphics[width=.49\linewidth]{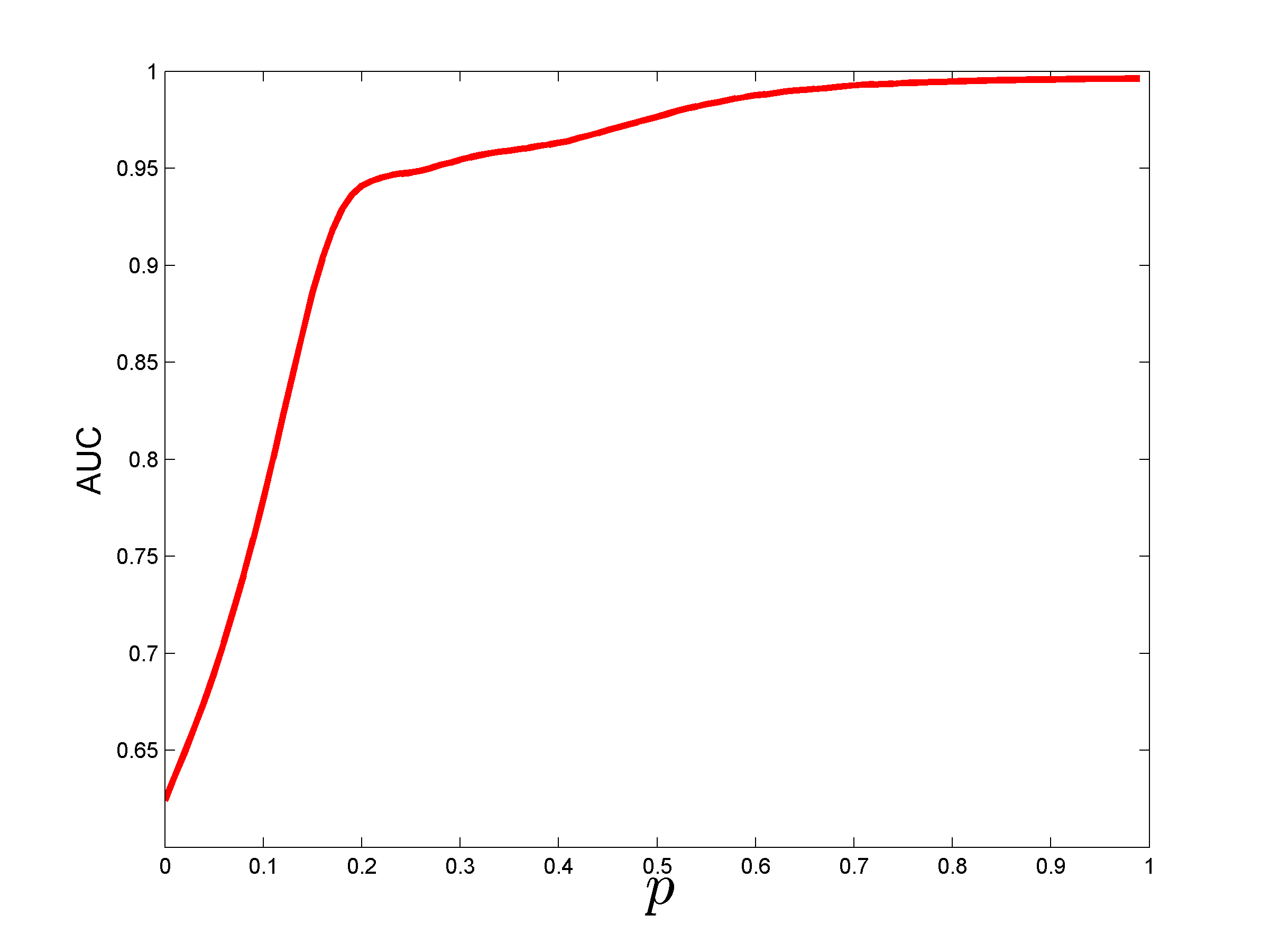}
\end{center}
\caption{\small AUC under the ROC curve for $D_{\log\Phi_k,BR}$ as a function of $k$ (Left) and for $D_{\log\varphi_p,BR}$ as a function of $p$ (Right) in the situation of Example~\ref{Ex:1} with $\ma=1.4$. We used $N=n=m=200$ pseudo samples of size $n-5 = 195$.}\label{F:select-ROC1}
\end{figure}

We repeat now the same exercice for the situation of Example~\ref{Ex:2}, with $\theta=\pi/16$. The left panel of Figure~\ref{F:select-ROC2} shows the AUC under the ROC curve for $D_{\log\Phi_k,BR}$ as a function of $k$, and the right panel shows the AUC under the ROC curve for $D_{\log\varphi_p,BR}$ as a function of $p$. We obtain $k_*=d$ and $p_*=0$, with $D_{\log\Phi_{k_*},BR}$ and $D_{\log\varphi_{p_*},BR}$ being equivalent to $D_{JS}$, see \eqref{DJS}, which coincides with Bhattacharyya distance $D_B$ when the distributions have the same mean. Figure~\ref{F:ROC2} confirms that $D_B$ is indeed a good choice in this example.
\fin

\begin{figure}[ht!]
\begin{center}
\includegraphics[width=.49\linewidth]{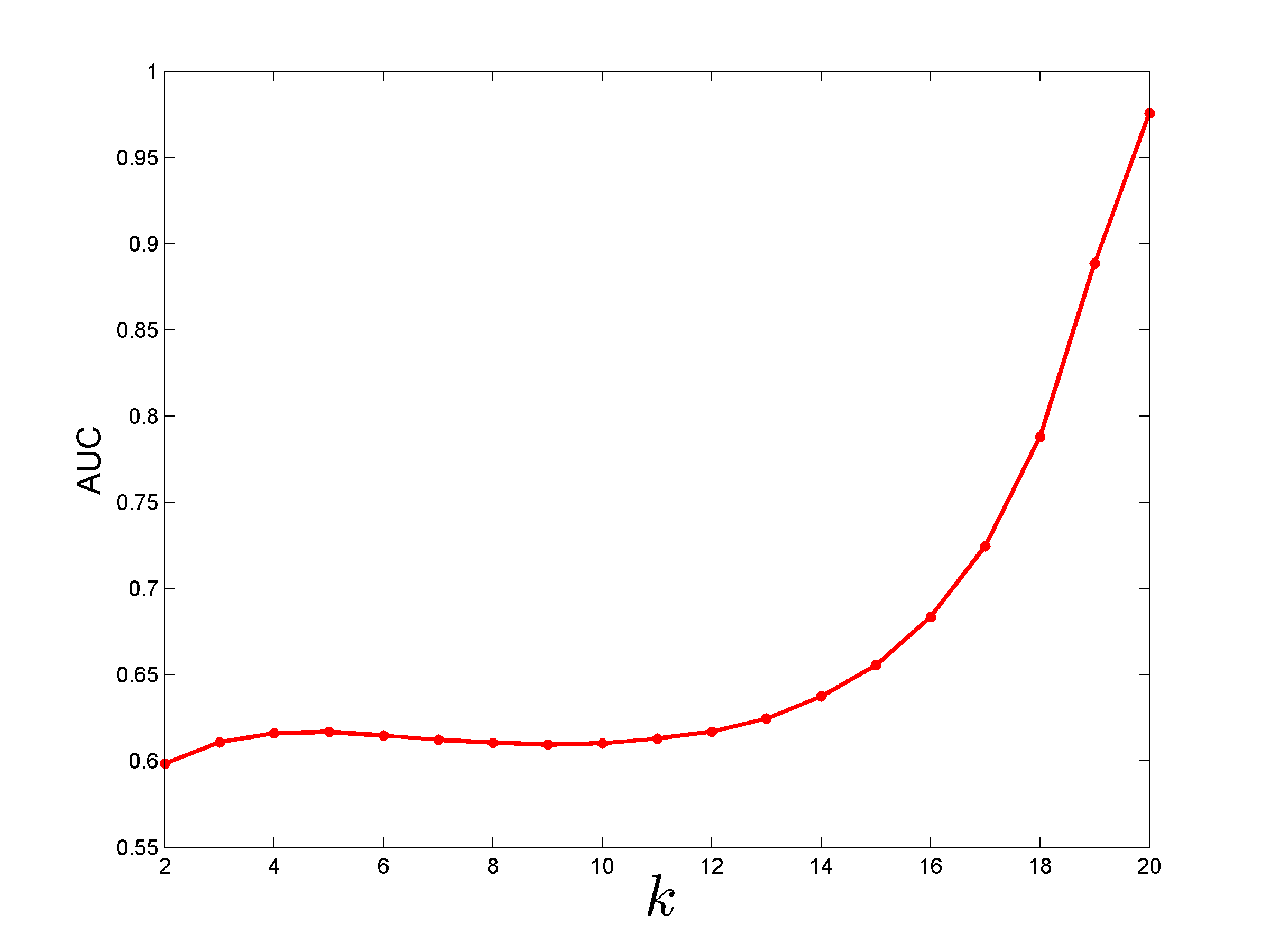} \includegraphics[width=.49\linewidth]{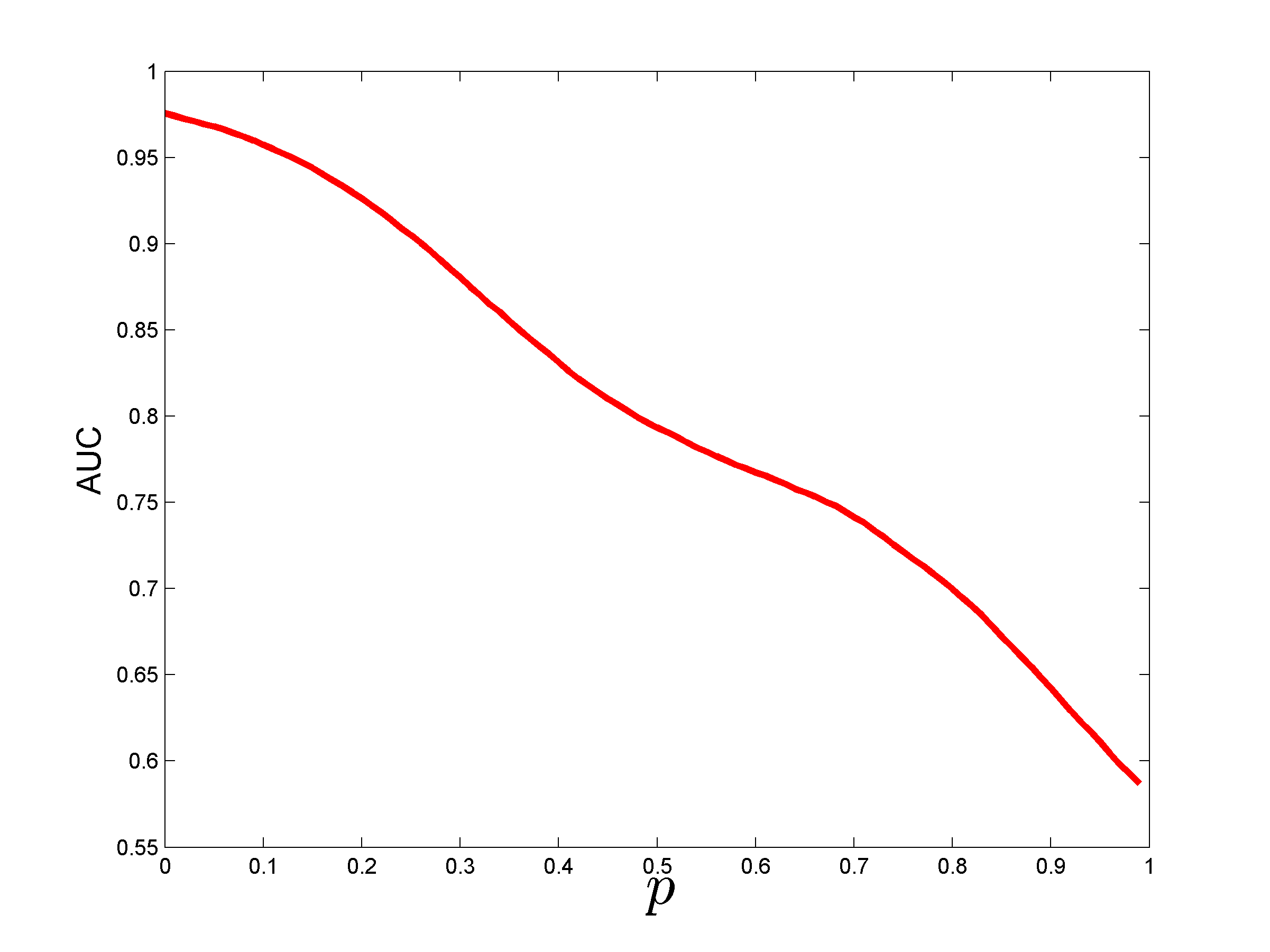}
\end{center}
\caption{\small AUC under the ROC curve for $D_{\log\Phi_k,BR}$ as a function of $k$ (Left) and for $D_{\log\varphi_p,BR}$ as a function of $p$ (Right) in the situation of Example~\ref{Ex:2} with $\theta=\pi/16$. $N=n=m=200$, $n'=m'=195$.}\label{F:select-ROC2}
\end{figure}
\subsection{Adjusting the critical value $\tau$}\label{S:adjusting tau}

We consider a simple (and incorrect) approach, where the $N$ pairs $(\Xb_{n'}^{(i,0)}, \Yb_{m'}^{(i,0)})$ of pseudo samples generated to select $k$ or $p$, see Section~\ref{S:choose k}, are also used to adjust the critical value $\tau$ of the threshold for the test statistic. Since pseudo samples have sizes $n'=n-r$ and $m'=m-r$ respectively, and distances are not invariant with respect to the sample size, we shall discard (randomly) $r$ points from $\Xb_n$ and $\Yb_m$ to compute the test statistic $D(\hat\mu_{n'},\hat\zeta_{m'})$.

Generation of bootstrap samples can be considered too. In that case, we first merge $\Xb_n$ and $\Yb_n$ into $\Zb_n=\{\Xb_n, \Yb_n\}$, then sample with replacement within $\Zb_n$, the first $n$ points give $\Xb_{n}^{(i,0)}$, the $m$ next points give $\Yb_{m}^{(i,0)}$ and we do not need to discard any data from from $\Xb_n$ and $\Yb_m$ (the test statistic is $D(\hat\mu_{n},\hat\zeta_{m})$).

\vsp
\noindent \emph{Examples~\ref{Ex:1} and \ref{Ex:2} (continued)}
Empirical results (false positive FP and true positive TP) for the situation in Example~\ref{Ex:1} are given in the left part of Table~\ref{Tb1}. For H0, $\Xb_n$ and $\Yb_n$ are normal samples generated with $\mu$; for H1, $\Xb_n$ is generated with $\mu$ and $\Yb_n$ with $\zeta$. The experiment is repeated 1000 times, the significance level is set at $5\%$.
The value of $k_*$ is searched within $\{1, \ldots,d\}$ and that of $p_*$ within $\{0,0.01, \ldots, 0.99\}$.
Results for Example~\ref{Ex:2} are indicated in the right part of the table.

In both examples, the percentage of false positive is notably larger than the targeted significance level of $5\%$, pointing out the weakness of the naive plug-in approach based on a selection of the best values $k_*$ and $p_*$ for $k$ and $p$.
Nevertheless, the percentage of true positives with a distance based on $\log\varphi_p$ or $\log\phi_k$ is much higher than for Bhattacharyya distance in Example~\ref{Ex:1} and is similar to the one with Bhattacharyya distance in Example~\ref{Ex:2}. These promising results confirm what can be observed in Figures~\ref{F:ROC1} and \ref{F:ROC2}.  \fin

\begin{table}
\begin{center}
\begin{tabular}{lrr|rr }

& \multicolumn{2}{c}{Example 1} & \multicolumn{2}{c}{Example 2} \\
                        & FP    & TP    & FP    & TP \\
\hline
$D_B$                   & 4.5   & 13.6  & 4.5   & 33.0\\
$D_{\log\varphi_p,BR}$  & 12.0  & 79.9  & 9.7   & 35.3\\
$D_{\log\Phi_k,BR}$     & 12.1  & 77.6  & 9.9   & 34.9 \\
\end{tabular}
\caption{Percentage of false positive FP (type-I error) and true positive TP obtained in 1000 repetitions for a targeted significance level of $5\%$.}
\label{Tb1}
\end{center}
\end{table}

\subsection{Example 3: comparison of means and covariances for the Wine Recognition Data}

We consider the wine data-set of the machine-learning repository, see {\url{www.mlr.cs.umass.edu/ml/datasets/Wine}}, widely used in particular as a test-bed for comparing classifiers. Here we simply consider the three classes of the data-set as three different data-sets $\Xb$, $\Yb$ and $\Zb$ and wish to test whether they significantly differ in their means and/or covariances. The data have dimension $d=14$ and the sample sizes are 59, 71 and 48. The empirical covariances have very large leading eigenvalues (larger than $10^4$) but also several eigenvalues smaller than one.

The left panel of Figure~\ref{F:winedata1} shows the value of distance $D_{\log\Phi_k,BR}$ computed for the empirical measures associated with the second and third data sets, $\Yb$ and $\Zb$ as a function of $k\in\{2,\ldots,d\}$. The curve in solid line (bottom) is when all data points are used, the one in dashed line (top) is when $r$ points are removed from each sample, see Section~\ref{S:adjusting tau}; we use $r=5$. The right panel of Figure~\ref{F:winedata1} shows (a kernel approximation of) the pdf of  $D_{\log\Phi_{10},BR}$ obtained from 1000 bootstrap samples under H0, see Section~\ref{S:adjusting tau}; the observed distance (corresponding to the value for $k=10$ on the curve in solid line on the left panel) is indicated by a vertical dashed line.
The hypothesis H0 that both samples come from distributions having the same mean and covariance is clearly rejected.
The figure obtained is similar when using sampling without replacement with $r=5$, see Section~\ref{S:choose k}. Similarly, H0 is also rejected for all other $k\in\{2,\ldots,d\}$, and when using $D_{\log\varphi_p,BR}$ for all $p=0,0.01,\ldots,0.99$. The same conclusions are obtained when comparing the distributions of $\Xb$ and $\Yb$, and $\Xb$ and $\Zb$.
They also remain unchanged when the three samples are first centered, indicating that they all have different covariances.

\begin{figure}[ht!]
\begin{center}
\includegraphics[width=.49\linewidth]{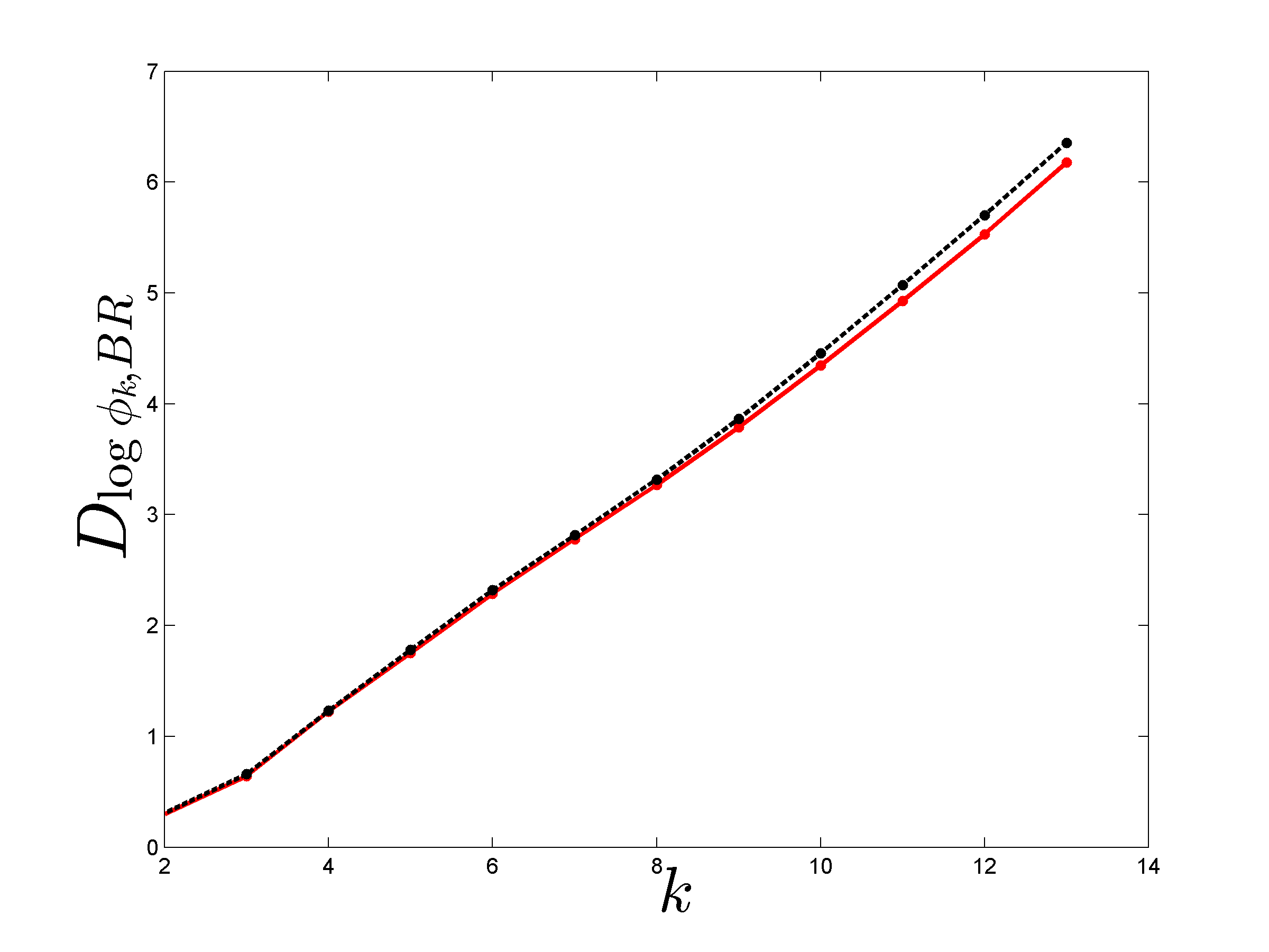} \includegraphics[width=.49\linewidth]{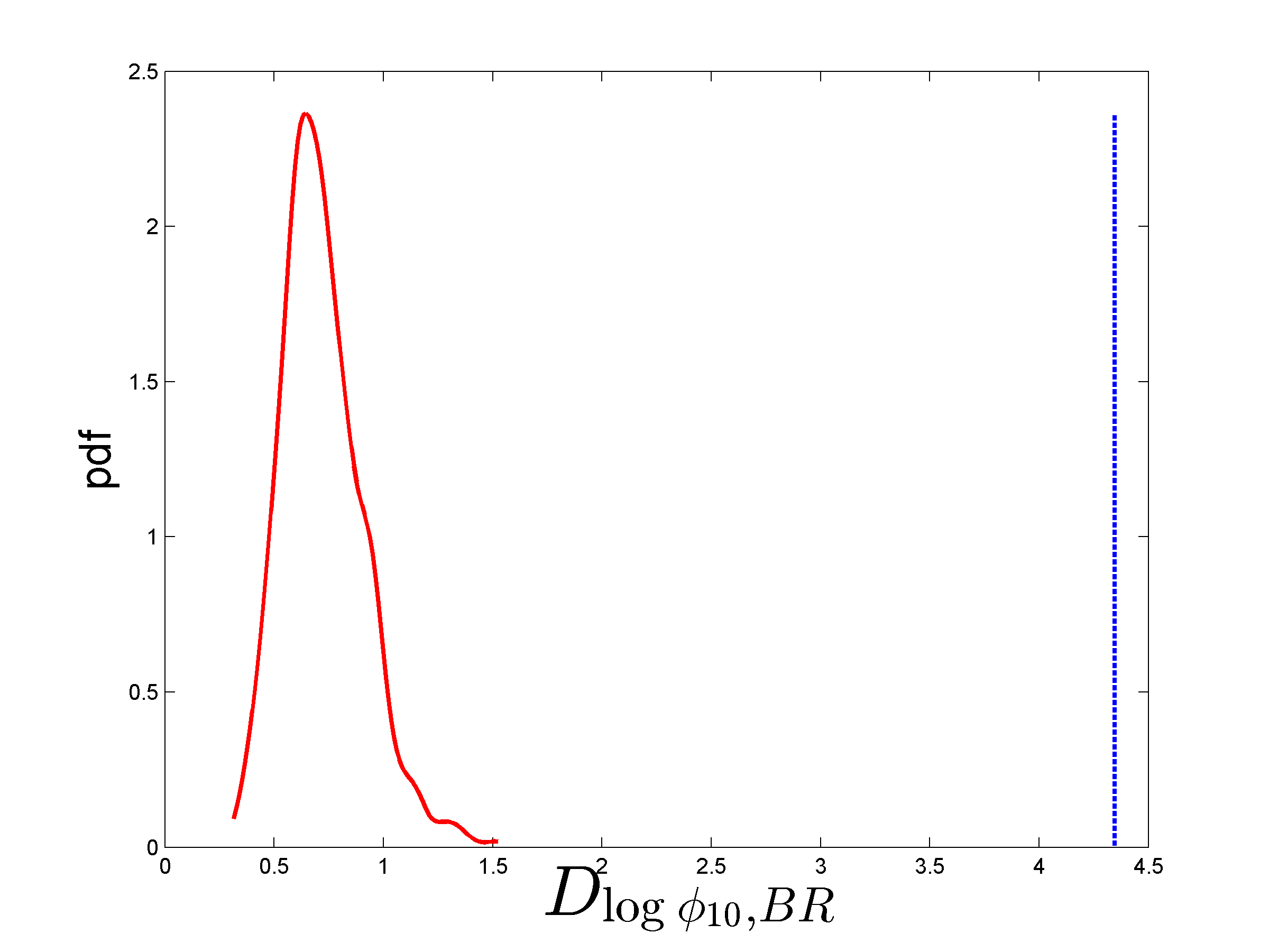}
\end{center}
\caption{\small Left: $D_{\log\Phi_k,BR}$ for the empirical measures associated with $\Yb$ and $\Zb$ as a function of $k$; solid line (bottom) when all data points are used, dashed line (top) when $n_2-r$ and $n_3-r$ points are used ($r=5$). Right: kernel approximation of the pdf of  $D_{\log\Phi_{10},BR}$ using 1000 bootstrap samples under H0 and observed value of $D_{\log\Phi_{10},BR}$ (vertical dashed line).}\label{F:winedata1}
\end{figure}

Since the data-sets $\Xb$, $\Yb$ and $\Zb$ have different sizes, in the calculation of $D_{\log \Phi_k,BR}$ we may exploit the fact that
\bea
({\widehat\Phi}_k)_n  = \frac{(n-k-1)!(n-1)^k}{(n-1)!}\, \Phi_k(\widehat \Sigma_{\mu,n})
\eea
forms an unbiased estimator of $\Phi_k(\Sigma_\mu)$ with minimum variance among all unbiased estimators, see Theorem~3.2 in \cite{PWZ2017}.
This modification does not change the conclusions above for this example.




\bibliographystyle{plain}

\bibliography{PWZ_entropy_and_distances}
\end{document}